\newcommand{\xdownarrow}[1]{%
  {\left\downarrow\vbox to #1{}\right.\kern-\nulldelimiterspace}
}
\newcommand{\xuparrow}[1]{%
  {\left\uparrow\vbox to #1{}\right.\kern-\nulldelimiterspace}
}
\newtheorem{theorem}{Theorem}[section]
\newtheorem{conjecture}[theorem]{Conjecture}
\newtheorem{definition}[theorem]{Definition}
\newtheorem{example}[theorem]{Example}
\newtheorem{lemma}[theorem]{Lemma}
\newtheorem{proposition}[theorem]{Proposition}
\newtheorem{remark}[theorem]{Remark}
\newenvironment{proof}[1][Proof]{\noindent\textbf{#1.} }{\ \rule{0.5em}{0.5em}}
\title{\textbf{Equimultiplicity of families of map germs from $\mathbb{C}^2$ to $\mathbb{C}^3$}}
\author{ \ \ \ \\{O.N. Silva\footnote{O.N. Silva: Instituto de Matemáticas, Universidad Nacional Autónoma de México (UNAM), Cuernavaca, México. \hspace{5cm} e-mail: otoniel@im.unam.mx}}}
\date{}
\begin{document}

\maketitle

\begin{abstract}
In 1971, Zariski proposed some questions in Theory of Singularities. One of such problems is the so-called, nowadays, \textit{``Zariski's multiplicity conjecture''}. In this work, we consider the version of this conjecture for families. We answer positively Zariski's multiplicity conjecture for a special class of non-isolated singularities.
\end{abstract}

\section{Introduction}

$ \ \ \ \ $ In 1971, Zariski proposed some questions in Theory of Singularities \cite{zariski71}. One of such problems is the so-called, nowadays, \textit{``Zariski's multiplicity conjecture''}. In this work, we consider the version of this conjecture for families. More precisely, let $g:(\mathbb{C}^n,0)\rightarrow (\mathbb{C},0)$ be a reduced germ of holomorphic function, $V(g)=g^{-1}(0)$ the corresponding germ of hypersurface in $\mathbb{C}^n$. Let

\[  \begin{array}{ccc}

G:(\mathbb{C}^n \times \mathbb{C}, \lbrace 0 \rbrace \times \mathbb{C}) & \rightarrow & (\mathbb{C},0)\\

(z,t) & \mapsto & G(z,t)=G_t(z)

 \end{array}\] 

\noindent be a deformation of $g$, that is, $G$ is a germ of holomorphic function such that $G_0=g$ and $G_t(0)=0$. The multiplicity of $V(G_t)=G_t^{-1}(0)$ at $0$ is the number of points of the intersection of $V(G_t)$ with a generic line passing close to the origin but not through the origin. Denote by $m(V(G_t))$ the multiplicity of $V(G_t)$ at $0$. One says that the family $V(G_t)$ is equimultiple if, for all $t$ near $0$, $m(V(G_t))=m(V(G_0))$. One says that the family $V(G_t)$ is topologically trivial if, for all $t$ near $0$, there is a germ of homeomorphism $h_t:(\mathbb{C}^n,0)\rightarrow(\mathbb{C}^n,0)$ such that $h(V(G_t))=V(G_0)$. Now, we can state:

\begin{conjecture}\label{conjecture}
\textit{(Zariski's multiplicity conjecture for families) If $G_t$ is topologically trivial, then is it equimultiple?}
\end{conjecture}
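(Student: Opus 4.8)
The plan is to restrict to the class indicated by the title: families $V(G_t)$ that are the images of a holomorphic family $f_t\colon(\mathbb{C}^2,0)\to(\mathbb{C}^3,0)$, $t$ near $0$, of finitely determined map germs (for instance the members of an unfolding of a finitely determined $f_0$). Since each $f_t$ is finitely determined, by Mond's classification it has corank at most $1$. If the corank is $0$ then $f_t$ is an immersion, $V(G_t)$ is smooth, and $m(V(G_t))\equiv 1$, so there is nothing to prove; hence I would assume corank $1$ and, after a holomorphically varying change of coordinates in source and target, write $f_t(x,y)=(x,p_t(x,y),q_t(x,y))$ with $p_t,q_t$ of order $\ge 2$ at the origin.

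The first step is a multiplicity formula. Because $f_t$ is finitely determined it is finite and generically one-to-one, so $\mathbb{C}^2$ together with $f_t$ is the normalization of $X_t:=V(G_t)$; thus $\mathcal{O}_{X_t,0}\hookrightarrow\mathbb{C}\{x,y\}$ is a finite birational extension, under which the Hilbert--Samuel multiplicity of a primary ideal is unchanged. Applying this to the maximal ideal, whose extension is the ideal $I_t:=(f_{1,t},f_{2,t},f_{3,t})\subset\mathbb{C}\{x,y\}$, gives $m(V(G_t))=e(I_t,\mathbb{C}\{x,y\})$. In the prenormal form $I_t=(x,p_t,q_t)$, and since $(p_t(0,y),q_t(0,y))\subset\mathbb{C}\{y\}$ is principal --- generated by whichever of $p_t(0,y),q_t(0,y)$ has smaller order --- the ideal $I_t$ is generated by $x$ and one further element; being primary it is then a parameter ideal, so its multiplicity equals its colength:
$$m(V(G_t))=\dim_{\mathbb{C}}\frac{\mathbb{C}\{x,y\}}{(x,p_t,q_t)}=\min\bigl(\operatorname{ord}_y p_t(0,y),\ \operatorname{ord}_y q_t(0,y)\bigr).$$
In other words $m(V(G_t))$ is the multiplicity of the restriction of $f_t$ to the line $\ker df_{t,0}=\{x=0\}$, i.e.\ of the parametrization of the curve germ $C_t:=X_t\cap\{x=0\}$; and, as the multiplicity of a hypersurface, $m(V(G_t))$ is upper semicontinuous in $t$.

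The second step is to obtain the reverse inequality $m(V(G_t))\ge m(V(G_0))$ from topological triviality. Here I would use that the normalization recovers $f_t$ from $X_t$ up to reparametrization, so a topological trivialization $h_t$ of the family $V(G_t)$ --- which carries the (topologically distinguished) singular locus of $X_0$ onto that of $X_t$ --- lifts through the normalization maps to a topological trivialization of the family of map germs $f_t$; in particular it carries the family of double point curves $D(f_t)$, and the family of branches $C_t$, onto topologically trivial families of reduced plane curve singularities. By the already-settled plane curve case of the conjecture --- topological triviality implies equimultiplicity for families of reduced plane curves --- these families are equimultiple, and since the lifted trivialization also keeps the degree of the parametrization $y\mapsto(p_t(0,y),q_t(0,y))$ constant, the displayed formula yields $m(V(G_t))\ge m(V(G_0))$. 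Together with upper semicontinuity this gives $m(V(G_t))\equiv m(V(G_0))$, i.e.\ equimultiplicity.

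I expect the genuine difficulty to be concentrated in the second step. The quantity $\dim_{\mathbb{C}}\mathbb{C}\{x,y\}/I_t$ is an analytic invariant of $f_t$ and is a priori \emph{not} a topological one, so the argument must either (i) show that the trivializing homeomorphisms can be chosen compatibly with the normalization and with the plane $\ker df_{t,0}$, honestly reducing matters to curves, or (ii) replace this by a direct comparison expressing $m(V(G_t))$ through invariants --- such as the image Milnor number $\mu_I(f_t)$, the numbers of triple points and of cross-caps, and the $\delta$-invariant of the double point curve --- that topological triviality is known to preserve in this class. The remaining ingredients (the reduction to corank $1$, the normalization-multiplicity formula, and the appeal to the curve case) should be routine.
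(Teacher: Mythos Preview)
The statement you are attempting to prove is presented in the paper as an \emph{open conjecture}, not as a theorem; the paper does not give (and explicitly does not claim) a proof of it even in the restricted class of images of finitely determined germs $(\mathbb{C}^2,0)\to(\mathbb{C}^3,0)$. What the paper actually proves are two partial results under strong extra hypotheses: Theorem~\ref{mainresult1} assumes $f$ is quasihomogeneous of type $(1,d_2,d_3;1,b)$ with $b\ge 2$ and concludes Whitney equisingularity (hence equimultiplicity) by showing, via the explicit description of the branches of $D(f)$ in Lemma~\ref{lemapropformulas} and Proposition~\ref{propdasformulas}, that every branch of $f(D(f))$ is smooth; Theorem~\ref{mainresult2} assumes $f$ is quasihomogeneous and the unfolding has non-decreasing weights, and concludes equimultiplicity via the Cohen--Macaulay criterion of Lemma~\ref{lemaaux}. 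In both cases the quasihomogeneity is used in an essential way to control the algebra.

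Your proposal, by contrast, makes no quasihomogeneity assumption and would, if it worked, settle the conjecture in the entire class of images of corank~$1$ finitely determined germs --- strictly more than the paper achieves. Your first step (the formula $m(V(G_t))=\dim_{\mathbb{C}}\mathbb{C}\{x,y\}/(x,p_t,q_t)$) is correct and is essentially Lemma~\ref{lemaaux}(a). The genuine gap is exactly where you locate it, in the second step: you need that the integer $\min\bigl(\operatorname{ord}_y p_t(0,y),\operatorname{ord}_y q_t(0,y)\bigr)$ is preserved by the lifted trivialization, but the lifted source homeomorphism has no reason to respect the line $\{x=0\}=\ker df_{t,0}$, so the ``curve $C_t$'' is not a topologically distinguished object and the appeal to equimultiplicity of plane curves does not apply to it. Your route~(i) would require a bilipschitz or $C^1$ trivialization (not available from the hypothesis), and route~(ii) --- a formula for $m(V(G_t))$ purely in terms of $\mu(D(f_t))$, $C(f_t)$, $T(f_t)$, $\mu_I(f_t)$ --- is not known; indeed the paper notes (see the discussion around \cite[Example~5.5]{otoniel1}) that topological triviality and Whitney equisingularity already diverge in this class, so no simple bookkeeping of the standard topological invariants will produce the multiplicity of the image. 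In short, your outline does not close the gap, and the paper circumvents it by imposing the quasihomogeneity hypotheses rather than by finding a general argument.
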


Almost fifty years later, Conjecture \ref{conjecture} is, in general, still unsettled (even for isolated hypersurfaces singularities). The answer is, nevertheless, known to be yes in several special cases. We cite for example, Zariski in \cite{zariski62}, Greuel in \cite{greuel2}, O'Shea in \cite{oshea}, Trotman in \cite{trotman3} and \cite{trotman5}. More recently, we can cite \cite{abd}, \cite{eyral3}, \cite{mendris}, \cite{plenat}, \cite{tomazella} and \cite{trotman4}. We also note that some important contributions for the non-isolated case of Conjecture \ref{conjecture} was were recently given by Eyral and Ruas in \cite{eyral5} and \cite{eyral4}. In order to know more about this conjecture and a more complete list, see the survey \cite{eyral2}.

In this work, we provide a positive answer to Zariski's multiplicity conjecture for some families of singular surfaces in $\mathbb{C}^3$ with non-isolated singularities under the condition that the family has a smooth normalization. For a family of surfaces $V(G_t)$ in $\mathbb{C}^3$ whose normalization is smooth, we can associate a family of parametrizations $f_t:(\mathbb{C}^2,0)\rightarrow (\mathbb{C}^3,0)$ whose images are $V(G_t)$. In this situation, it follows from \cite[Theorem 38]{ref2} that the $\mathcal{A}$-topological equisingularity  of $f_t$ implies that the Milnor numbers of the sets $f_t^{-1}(\Sigma_t)$ remain constant, where $\Sigma_t$ is the singular set of $V(G_t)$. The converse also holds with some additional hypothesis on $G_t$ (see \cite[Corollary 39]{ref2}).

A particular class of parametrized singular surfaces consists on surfaces which are the image of an $\mathcal{A}$-finitely determined map germ $f:(\mathbb{C}^2,0)\rightarrow (\mathbb{C}^3,0)$. Let $F:(\mathbb{C}^2 \times \mathbb{C},0)\rightarrow (\mathbb{C}^3 \times \mathbb{C},0)$, $F(x,y,t)=(f_t(x,y),t)$ be a $1$-parameter unfolding of a finitely determined map germ $f:(\mathbb{C}^2,0)\rightarrow (\mathbb{C}^3,0)$. In this case, the family of surfaces $F(\mathbb{C}^2 \times \mathbb{C})$ is topologically trivial if and only if $\mu(D(f_t))$ is constant in a neighbourhood of $0$, where $D(f_t)$ is the double point curve of $f_t$ (see Section \ref{sec2.1}) and $\mu$ is the Milnor number of these sets (see \cite[Corollary 40]{ref2} and also \cite[Theorem 6.2]{ref1}). In this context, we can ask the following question:

\begin{flushleft}
\textit{Question: What kind of conditions do we need on $f$ that imply equimultiplicity of a topologically trivial family $F(\mathbb{C}^2 \times \mathbb{C})$ ?}
\end{flushleft}

In this sense, we prove the following result, which is our first main result:

\begin{theorem}\label{mainresult1intro}
Let $f:(\mathbb{C}^2,0)\rightarrow (\mathbb{C}^3,0)$ be a finitely determined map germ of corank $1$. Write $f$ in the form $f(x,y)=(x,p(x,y),q(x,y))$ and suppose that $p$ and $q$ are quasihomogeneous functions such that the weights of the variables are $w(x)=1$ and $w(y)=b \geq 2$. Let $F=(f_t,t)$ be a $1$-parameter unfolding of $f$. If $F$ is topologically trivial, then $F$ is Whitney equisingular. In particular, the family $F(\mathbb{C}^2 \times \mathbb{C})$ is equimultiple.
\end{theorem}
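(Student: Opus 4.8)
The plan is to reduce the theorem to a statement about the double point curve $D(f_t)$ and then invoke the known equivalences from \cite{ref2}. By the excerpt's discussion (\cite[Corollary 40]{ref2} and \cite[Theorem 6.2]{ref1}), topological triviality of $F$ is equivalent to the constancy of $\mu(D(f_t))$. On the other hand, Whitney equisingularity of $F$ is governed by the constancy of the full set of polar invariants—equivalently, by excellence in the sense of Gaffney, which for map germs $(\mathbb{C}^2,0)\to(\mathbb{C}^3,0)$ amounts to the constancy of the pair $(\mu(D(f_t)), m(D(f_t)))$, i.e. the Milnor number \emph{and} the multiplicity of the double point curve in the source, together with the number of cross-caps and triple points (which are already controlled by $\mu(D(f_t))$ via the Marar–Mond formulas). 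So the real content is: \emph{if $f$ is quasihomogeneous of the stated type and $\mu(D(f_t))$ is constant, then $m(D(f_t))$ is constant too.}

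**The quasihomogeneous leverage.** The key step is to exploit the quasihomogeneity of $f=(x,p(x,y),q(x,y))$ with weights $w(x)=1$, $w(y)=b\ge 2$. For corank $1$ germs the double point curve $D(f)$ in the source is defined by a single function $\lambda(x,y)$ obtained via the divided-difference construction: $p(x,y)-p(x,y')=(y-y')\,\tilde p$, $q(x,y)-q(x,y')=(y-y')\,\tilde q$, and $\lambda$ is (up to units) the resultant or the appropriate $2\times 2$ determinant in the $(x,y,y')$-variables restricted to the diagonal, which is again quasihomogeneous in $(x,y)$ with a computable weighted degree $d$. Because $\lambda$ is quasihomogeneous, $D(f)=V(\lambda)\subset(\mathbb{C}^2,0)$ is a quasihomogeneous plane curve, and for such curves the Milnor number $\mu$ \emph{determines} the multiplicity $m$: indeed the weighted degree $d$ and the weights $(1,b)$ are recovered from the equisingularity type, and by the quasihomogeneous Milnor formula $\mu(D(f)) = (d/1 - 1)(d/b - 1)$ one solves for $d$, hence for $m(D(f)) = $ the order of $\lambda$, which is an explicit function of $d$ and $b$. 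The same computation applies fibrewise, provided one first argues that $\mu(D(f_t))$ constant forces $D(f_t)$ to remain quasihomogeneous-equisingular, or at least forces its multiplicity to be semicontinuous-rigid.

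**Handling the family.** To run the fibrewise argument I would proceed as follows. First, by \cite[Theorem 38]{ref2} (applied with $\Sigma_t$ the singular set, noting $f_t^{-1}(\Sigma_t)=D(f_t)$ here), topological triviality gives $\mu(D(f_t))$ constant. Next, since $D(f_0)$ is quasihomogeneous, its equisingularity type is determined by $\mu(D(f_0))$, and by upper semicontinuity of the multiplicity of plane curves in a flat family together with the lower semicontinuity-type control coming from $\mu$ constant (Lê–Ramanujam-type rigidity in the curve case is classical—$\mu$ constant for plane curves implies equisingularity of the family, hence $m$ constant), we get $m(D(f_t)) \equiv m(D(f_0))$. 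Then invoke Gaffney's criterion / \cite{ref2}: for $F$ a one-parameter unfolding of a finitely determined $f:(\mathbb{C}^2,0)\to(\mathbb{C}^3,0)$, constancy of $(\mu(D(f_t)), m(D(f_t)))$ implies $F$ is excellent, hence Whitney equisingular; and Whitney equisingularity implies equimultiplicity of the family of surfaces $F(\mathbb{C}^2\times\mathbb{C})$ by the standard fact that the multiplicity of a hypersurface is a Whitney invariant (a constant in a Whitney-equisingular family).

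**Main obstacle.** The delicate point is the middle step: showing that $\mu(D(f_t))$ constant actually forces $m(D(f_t))$ constant, using quasihomogeneity of $f$—and in particular verifying that $D(f_t)$ inherits enough quasihomogeneous structure, or that one may reduce to $f_t$ itself being (equivalent to) quasihomogeneous in the relevant range. One must be careful that a general unfolding $f_t$ is \emph{not} quasihomogeneous, so the argument cannot be "$D(f_t)$ is quasihomogeneous" directly; instead one shows that $\mu(D(f_t))$ constant plus the special algebraic form of $f$ pins down the weighted order of the defining equation $\lambda_t$ of $D(f_t)$, e.g. via a Newton-polygon/filtration argument on $\lambda_t$ controlled by the weights $(1,b)$, or by showing the family $D(f_t)$ is $\mu$-constant hence equisingular as plane curves (which \emph{does} give $m$ constant without needing each fibre quasihomogeneous), and that the equisingularity type of $D(f_0)$ is exactly the quasihomogeneous one. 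I expect the proof to spell out the formula for $\lambda$ in terms of $p,q$, compute its weighted degree and order explicitly, and then close the argument by the plane-curve $\mu$-constant-implies-equisingular theorem together with \cite[Corollary 39]{ref2} or Gaffney's excellence criterion to upgrade to Whitney equisingularity.
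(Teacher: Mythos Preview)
Your proposal has a genuine gap: you are controlling the wrong multiplicity. The criterion you invoke---``Whitney equisingularity amounts to constancy of $(\mu(D(f_t)),m(D(f_t)))$''---is not what Gaffney's theory gives in this setting. For corank $1$ germs $(\mathbb{C}^2,0)\to(\mathbb{C}^3,0)$, the paper's Theorem \ref{18}(b) (from \cite[Cor.~8.9]{gaffney}) says $F$ is Whitney equisingular if and only if $\mu(D(f_t))$ and $m(f_t(D(f_t)))$ are constant, where $f_t(D(f_t))\subset\mathbb{C}^3$ is the \emph{image} of the double point curve, a space curve in the target. Your argument that $\mu$-constant for plane curves implies equisingularity, hence $m(D(f_t))$ constant, is correct but irrelevant: it gives constancy of the source multiplicity, not the target one. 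These are genuinely different, and the gap is fatal: the paper itself notes (see the remarks after Theorem \ref{mainresult1intro} and \cite[Example 5.5]{otoniel1}) that for $b=1$ there exist topologically trivial unfoldings that are \emph{not} Whitney equisingular. Your argument never uses the hypothesis $b\geq 2$ and would apply verbatim to $b=1$, so it cannot be right.

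The paper's actual proof is structurally different and explains exactly where $b\geq 2$ enters. Using the quasihomogeneity with weights $(1,b)$, one first classifies the irreducible components of $D(f)$: each is either $V(y-\alpha x^b)$ (an identification component) or $V(x)$ (the unique possible fold component); this is Lemma \ref{lemapropformulas} and Proposition \ref{propdasformulas}. One then shows that the image under $f$ of \emph{every} branch of $D(f)$ is a smooth curve in $\mathbb{C}^3$. For $V(y-\alpha x^b)$ this is immediate since $f(u,\alpha u^b)=(u,\ldots)$ has first coordinate $u$. For the fold branch $V(x)$ one analyses $f(0,u)=(0,\alpha u^n,\beta u^m)$ and uses a parity/divisibility argument that fails precisely when $b=1$ (Case (3), Part 1 of the proof of Theorem \ref{mainresult1}) to force the image to be a line. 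Once every branch of $f(D(f))$ is smooth, \cite[Lemma 7.1]{otoniel1} upgrades topological triviality to constancy of $m(f_t(D(f_t)))$, and Theorem \ref{18}(b) finishes. The quasihomogeneous structure is used not to control a Newton filtration on $\lambda_t$, as you propose, but to pin down the geometry of the branches of $f(D(f))$ at $t=0$.
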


We recall that if a map germ from $\mathbb{C}^2$ to $\mathbb{C}^3$ has corank $1$, then we can always find coordinates in which $f$ takes the form $f(x,y)=(x,p(x,y),q(x,y))$ (see for instance \cite[Lemma 4.1]{mond7}). 

Let $f:(\mathbb{C}^2,0)\rightarrow (\mathbb{C}^3,0)$ be a finitely determined map germ of corank $1$ and $F(x,y,t)=(f_t(x,y),t)$ a topologically trivial $1$-parameter unfolding of $f$. By combining \cite[Corollary 40]{ref2} and \cite[Prop. 8.6 and Cor. 8.9]{gaffney} one can conclude that $F$ is Whitney equisingular (see Definition \ref{defequisingularity}) if and only if the multiplicity $m(f_t(D(f_t)))$ is constant, where $f_t(D(f_t))$ denotes the image of the double point curve of $f_t$ (see Section \ref{sec2.1}).

In this way, our technique is to study the irreducible components (branches) of the double point curves $D(f)$ (in the source) and $f(D(f))$ (in the target). In \cite[Def. 2.4]{otoniel1}, Ruas and the author introduced the notion of identification and fold components of $D(f)$. In the case where $f$ is homogeneous and has corank $1$, a complete description of these two kinds of components of $D(f)$ was given in \cite[Prop. 6.2]{otoniel1}. In Section \ref{quasihomog}, we extend this result for a particular class of quasihomogeneous maps (Proposition \ref{propdasformulas}).

In Section \ref{sec:3}, with the description of the identification and fold components of $D(f)$ given in Proposition \ref{propdasformulas}, we show that in the conditions of Theorem \ref{mainresult1intro}, each irreducible component of $f(D(f))$ is smooth. Hence, $F$ is Whitney equisingular. In particular, $F$ is equimultiple (Theorem \ref{mainresult1}).

Apart from the hypothesis of Theorem \ref{mainresult1intro}, the situation may be a little more complicated, since $F$ may not be Whitney equisingular (see \cite[Example 5.5]{otoniel1}). At this point, we note that the class of topologically trivial unfoldings of a homogeneous finitely determined map germ $f$ (i.e. when $b=1$ in Th. \ref{mainresult1intro}) was studied recently by Ruas and the author in \cite{otoniel1}, and by the author in his PhD. thesis \cite{otoniel2}, where the first known counter-example to a conjecture by Ruas \cite[p. 120]{ref17} on the equivalence between Whitney equisingularity and topological triviality was obtained. Also, we remark that if $b=1$ in Theorem \ref{mainresult1intro} and the greatest common divisor of the degrees of $p$ and $q$ is different to $2$, then $F$ is also Whitney equisingular (see \cite[Theorem 7.2]{otoniel1}). In this way, Theorem \ref{mainresult1intro} can be sees as an extension of (\cite[Theorem 7.2]{otoniel1}) for another class of maps where Ruas's conjecture is true.

In Section \ref{sec5}, we relate the equimultiplicity of a family $f_t$ in terms of the Cohen-Macaulay property of a certain local ring (Lemma \ref{lemaaux}). In sequence, we finished this work showing the following theorem, which is considered our second main result.

\begin{theorem}\label{mainresult2intro} Let $f:(\mathbb{C}^2,0)\rightarrow (\mathbb{C}^3,0)$ be a quasihomogeneous and finitely determined map germ of corank $1$. Let $F(x,y,t)=(f_t(x,y),t)$ be a topologically trivial $1$-parameter unfolding of $f$. If $F$ is an unfolding of non-decreasing weights, then the family $F(\mathbb{C}^2 \times \mathbb{C})$ is equimultiple.
\end{theorem}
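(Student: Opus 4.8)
The plan is to derive the theorem from Lemma~\ref{lemaaux}, which recasts the equimultiplicity of the family of images $V(G_t)=f_t(\mathbb C^2)$ as the Cohen--Macaulayness of a local ring $R_F$ built from the family $F$; so it suffices to verify that $R_F$ is Cohen--Macaulay under the standing hypotheses. The first ingredient is an explicit description of the multiplicity. Writing $f=(x,p(x,y),q(x,y))$ and using that a finitely determined $f$ is the (generically one-to-one) normalisation of $V(g)$, the projection formula for multiplicities gives $m(V(g))=e\big(f^{*}\mathfrak m_{\mathbb C^3};\mathcal O_{\mathbb C^2,0}\big)=e\big((x,p,q);\mathbb C\{x,y\}\big)$. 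After an $\mathcal A$-equivalence making $p,q\in(y)$ one has the equality of ideals $(x,p,q)=(x,y^{m})$ with $m=\min\{\operatorname{ord}_y p(0,y),\operatorname{ord}_y q(0,y)\}$, so $m(V(g))=m$; and since $f$ is quasihomogeneous with weights $w(x),w(y)$ and degrees $d_2=\deg p$, $d_3=\deg q$, this $m$ is read off from the weights. Exactly the same computation applied fibrewise (after the harmless reparametrisation restoring the first coordinate of $f_t$ to $x$) shows $m(V(G_t))=e\big(f_t^{*}\mathfrak m_{\mathbb C^3};\mathcal O_{\mathbb C^2,0}\big)$, so equimultiplicity is the assertion that this number does not drop below $m$.

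Next I would exploit the quasihomogeneity of $f$. The image $V(g)$ is then a weighted homogeneous surface, so the ring attached to the special member $f=f_0$ carries a positive grading; in particular it coincides with its own tangent cone, it is Cohen--Macaulay, and its multiplicity is visibly $m$. Hence the case $t=0$ is automatic, and the problem is to propagate the Cohen--Macaulay property, with unchanged multiplicity, to all small $t$.

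This is where the non-decreasing-weights hypothesis is used. By definition it says that each perturbation term added to the $i$-th coordinate of $f$ has weighted degree at least $\deg f_i$, so $f_{t,i}$ lies in the weighted-filtration level $\ge\deg f_i$ for every small $t$; equivalently, the parameter $t$ can be fitted into the weighted structure. Consequently $\operatorname{gr}R_F$ is a flat (indeed polynomial) extension, in the parameter direction, of $\operatorname{gr}R_{f_0}$, which is Cohen--Macaulay; hence $R_F$ is Cohen--Macaulay. On the level of the ideal this says that the weighted initial forms of $f_{t,1},f_{t,2},f_{t,3}$ still generate an ideal of multiplicity $m$, so $e\big(f_t^{*}\mathfrak m_{\mathbb C^3}\big)$ cannot fall below $m$; as upper semicontinuity of the multiplicity forces it to be at most $m$, it equals $m$. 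The topological triviality of $F$ enters to make Lemma~\ref{lemaaux} applicable --- keeping the family equidimensional, $f_t$ generically one-to-one onto $V(G_t)$, and the double-point data of the family constant, all of which are needed for the dimension count behind the Cohen--Macaulay criterion. Feeding this back into Lemma~\ref{lemaaux} yields that $F(\mathbb C^2\times\mathbb C)$ is equimultiple.

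The step I expect to be the main obstacle is precisely the transition between source and target: the non-decreasing-weights condition is a statement about the parametrisations $f_t$ (upstairs in $\mathbb C^2$), whereas the multiplicity and the Cohen--Macaulay condition of Lemma~\ref{lemaaux} live downstairs, after elimination. One has to show that taking the image of a non-decreasing-weights unfolding is compatible with the weighted filtration --- equivalently, that the defining equation $G_t$ of $V(G_t)$ contains no monomial of order smaller than $m$ --- and for this the finite determinacy of $f$ (which rules out a degenerate transverse structure, so that the integers $w(x),w(y),d_2,d_3,m$ cannot conspire to lower $m$) has to be combined with the topological triviality of $F$. Once this compatibility is established the argument closes.
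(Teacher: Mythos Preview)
Your overall strategy---reduce to Lemma~\ref{lemaaux} and verify that $R_F:=\mathbb C\{x,y,t\}/\langle F^{*}(X),F^{*}(Y),F^{*}(Z)\rangle$ is Cohen--Macaulay---is exactly the paper's plan. However, your execution of the Cohen--Macaulay step is not a proof, and your last paragraph shows a misunderstanding of what actually needs to be done.

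First, the associated-graded argument is not justified. You assert that ``$\operatorname{gr}R_F$ is a flat (indeed polynomial) extension, in the parameter direction, of $\operatorname{gr}R_{f_0}$'', but you never specify the filtration or explain why the initial ideal is $t$-independent. Under the non-decreasing-weights hypothesis, perturbation terms may well have weighted degree \emph{equal} to $d_i$, so the weighted initial forms of $F^{*}(Y),F^{*}(Z)$ can depend on $t$; the claimed polynomiality over $\operatorname{gr}R_{f_0}$ then fails on its face. Even if some version of this could be repaired, you do not carry it out; and the semicontinuity sentence that follows mixes two different inequalities without making either precise.

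Second, your ``main obstacle'' paragraph is a detour. Lemma~\ref{lemaaux} is stated entirely in the source: it involves $\mathbb C\{x,y,t\}$ modulo the pullbacks $F^{*}(X),F^{*}(Y),F^{*}(Z)$, not the implicit equation $G_t$ of the image. There is no elimination step to worry about, and no need to control monomials of $G_t$.

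The paper's argument is far more elementary and avoids all of this. Since $F^{*}(X)=x$, one has
\[
R_F\;\simeq\;\mathbb C\{y,t\}\big/\big\langle\,\alpha y^{n}+h(0,y,t),\ \beta y^{m}+g(0,y,t)\,\big\rangle,
\]
where $p(0,y)=\alpha y^{n}$, $q(0,y)=\beta y^{m}$. The non-decreasing-weights hypothesis forces $\operatorname{ord}_y h(0,y,t)\ge n$ and $\operatorname{ord}_y g(0,y,t)\ge m$, so whenever $\alpha\neq 0$ (resp.\ $\beta\neq 0$) the corresponding generator equals $y^{n}$ (resp.\ $y^{m}$) times a unit in $\mathbb C\{y,t\}$. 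A short case analysis on $(\alpha,\beta)$---using, in the case $\alpha=0$, that the restriction $f|_{\{x=0\}}$ is generically at most $2$-to-$1$ so $m=2$, and that topological triviality rules out the perturbation making $f_t$ an immersion---shows $R_F\simeq\mathbb C\{y,t\}/\langle y^{k}\rangle$ for the appropriate $k$. This ring is visibly Cohen--Macaulay, and Lemma~\ref{lemaaux}(b) finishes. This is the missing concrete computation your outline gestures at but never performs.
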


\section{Preliminaries}

$ \ \ \ \ $ Throughout the paper, given a finite map $f:\mathbb{C}^2\rightarrow \mathbb{C}^3$, $(x,y)$ and $(X,Y,Z)$ are used to denote systems of coordinates in $\mathbb{C}^2$ (source) and $\mathbb{C}^3$ (target), respectively. Given an $1$-parameter unfolding $F:\mathbb{C}^2 \times \mathbb{C} \rightarrow \mathbb{C}^3 \times \mathbb{C}$ of $f$, we call the last coordinate (i.e; the parameter space) in $\mathbb{C}^2 \times \mathbb{C}$ and $\mathbb{C}^3 \times \mathbb{C}$ by $t$. Also, $\mathbb{C} \lbrace x_1,\cdots,x_n \rbrace \simeq \mathcal{O}_n$ denotes the local ring of convergent power series in $n$ variables.

\subsection{Double point spaces}\label{sec2.1}

$ \ \ \ \ $ In this section, we describe the sets of double points of a finite map from $\mathbb{C}^2$ to $\mathbb{C}^3$ following the description given in \cite[Section 2]{ref9} (and also in \cite[Section 1]{ref7} and \cite[Section 3]{ref12}). Let $U \subset \mathbb{C}^2$ and $V \subset \mathbb{C}^3$ be open sets. Throughout we assume that a map $f:U\rightarrow V$ is finite, that is, holomorphic, closed and finite-to-one, unless otherwise stated.

Following Mond \cite[Section 3]{ref12}, we define spaces related to the double points of a given finite mapping $f:U \rightarrow V$, by firstly considering the sheaf $\mathcal{I}_{2}$ and $\mathcal{I}_{3}$  defining the diagonal of $\mathbb{C}^2 \times \mathbb{C}^2$ and $\mathbb{C}^3 \times \mathbb{C}^3$, respectively. That is, locally 

\begin{center}
 $\mathcal{I}_{2}=\langle x-x^{'},y-y^{'} \rangle$ $ \ \ \ $ and $ \ \ \ $ $\mathcal{I}_{3}=\langle X-X^{'},Y-Y^{'}, Z-Z^{'} \rangle$.
 \end{center} 

Since the pull-back $(f \times f)^{\ast}\mathcal{I}_{3}$ is contained in $\mathcal{I}_{2}$ and $U$ is small enough, then there exist sections $\alpha_{ij}\in \mathcal{O}_{\mathbb{C}^{2} \times \mathbb{C}^2}(U \times U)$ well defined in all $U \times U$, such that

\begin{center}
$f_{i}(x,y)-f_{i}(x^{'},y^{'})= \alpha_{i1}(x,y,x^{'},y^{'})(x-x^{'})+ \alpha_{i2}(x,y,x^{'},y^{'})(y-y^{'}), \ for \ i=1,2,3.$
\end{center}

If $f(x,y)=f(x^{'},y^{'})$ and $(x,y) \neq (x^{'},y^{'})$, then every $2 \times 2$ minor of the matrix $\alpha=(\alpha_{ij})$ must vanish at $(x,y,x^{'},y^{'})$. Denote by $\mathcal{R}(\alpha)$ the ideal in $\mathcal{O}_{\mathbb{C}^{4}}$ generated by the $2\times 2$ minors of $\alpha$. Then we have the following definition.

\begin{definition} The \textit{lifting of the double point space of $f$} is the complex space

\[
D^{2}(f)=V((f\times f)^{\ast}\mathcal{I}_{3}+\mathcal{R}(\alpha)).
\]

\end{definition}

Although the ideal $\mathcal{R}(\alpha)$ depends on the choice of the coordinate functions of $f$, in \cite[Section 3]{ref12} it is proved that the ideal $(f\times f)^{\ast}\mathcal{I}_{3}+\mathcal{R}(\alpha)$ does not, and so $D^{2}(f)$ is well defined. It is not hard to see that the points in the underlying set of $D^{2}(f)$ are exactly the ones in $U \times U$ of type $(x,y,x^{'},y^{'})$ with $(x,y) \neq (x^{'},y^{'})$, $f(x,y)=f(x^{'},y^{'})$ and of type $(x,y,x,y)$ such that $(x,y)$ is a singular point of $f$.

Let $f:(\mathbb{C}^2,0)\rightarrow(\mathbb{C}^{3},0)$ be a finite map germ and denote by $I_{3}$ and $R(\alpha)$ the stalks at $0$ of $\mathcal{I}_{3}$ and $\mathcal{R}(\alpha)$. By taking a representative of $f$, we define the \textit{lifting of the double point space of the map germ $f$} as the complex space germ $D^{2}(f)=V((f \times f)^{\ast}I_{3}+R(\alpha))$.

Once the lifting $D^2(f) \subset \mathbb{C}^2 \times \mathbb{C}^2$ is defined, we now consider its image $D(f)$ on $\mathbb{C}^2$ by the projection $\pi:\mathbb{C}^2 \times \mathbb{C}^2 \rightarrow \mathbb{C}^2$ onto the first factor. The most appropriate structure for $D(f)$ is the one given by the Fitting ideals, because it relates in a simple way the properties of the spaces $D^2(f)$ and $D(f)$. 

Another important space to study the topology of $f(\mathbb{C}^{2})$ is the double point curve in the target, that is, the image of $D(f)$ by $f$, denoted by $f(D(f))$, which will also be consider with the structure given by Fitting ideals. 

More precisely, given a finite morphism of complex spaces $f:X\rightarrow Y$ the push-forward $f_{\ast}\mathcal{O}_{X}$ is a coherent sheaf of $\mathcal{O}_{Y}-$modules (see \cite[Chapter 1]{grauert}) and to it we can (as in \rm\cite[Section 1]{ref13}) associate the Fitting ideal sheaves $\mathcal{F}_{k}(f_{\ast}\mathcal{O}_{X})$. Notice that the support of $\mathcal{F}_{0}(f_{\ast}\mathcal{O}_{X})$ is just the image $f(X)$. Analogously, if $f:(X,x)\rightarrow(Y,y)$ is a finite map germ then we denote also by $ \mathcal{F}_{k}(f_{\ast}\mathcal{O}_{X})$ the \textit{k}th Fitting ideal of $\mathcal{O}_{X,x}$ as $\mathcal{O}_{Y,y}-$module. In this way, we have the following definition:

\begin{definition} Let $f:U \rightarrow V$ be a finite mapping.\\

\noindent (a) Let ${\pi}|_{D^2(f)}:D^2(f) \subset U \times U \rightarrow U$ be the restriction to $D^2(f)$ of the projection $\pi$. The \textit{double point space} is the complex space

\begin{center}
$D(f)=V(\mathcal{F}_{0}({\pi}_{\ast}\mathcal{O}_{D^2(f)}))$.
\end{center}

\noindent Set theoretically we have the equality $D(f)=\pi(D^{2}(f))$.\\

\noindent (b) The \textit{double point space in the target} is the complex space $f(D(f))=V(\mathcal{F}_{1}(f_{\ast}\mathcal{O}_2))$. Notice that the underlying set of $f(D(f))$ is the image of $D(f)$ by $f$.\\ 

\noindent (c) Given a finite map germ $f:(\mathbb{C}^{2},0)\rightarrow (\mathbb{C}^3,0)$, \textit{the germ of the double point space} is the germ of complex space $D(f)=V(F_{0}(\pi_{\ast}\mathcal{O}_{D^2(f)}))$. \textit{The germ of the double point space in the target} is the germ of the complex space $f(D(f))=V(F_{1}(f_{\ast}\mathcal{O}_2))$.

\end{definition}

\begin{remark} If $f:U \subset \mathbb{C}^2 \rightarrow V \subset \mathbb{C}^3 $ is finite and generically $1$-to-$1$, then $D^2(f)$ is Cohen-Macaulay and has dimension $1$ (see \rm\cite[\textit{Prop. 2.1}]{ref9}\textit{). Hence, $D^2(f)$, $D(f)$ and $f(D(f))$ are curves. In this case, without any confusion, we also call these complex spaces by the ``lifting of the double point curve'', the ``double point curve'' and the ``image of the double point curve'', respectively.}
\end{remark}

\subsection{Finite Determinacy}

\begin{definition}(a) Two map germs $f,g:(\mathbb{C}^2,0)\rightarrow (\mathbb{C}^3,0)$ are $\mathcal{A}$-equivalent, denoted by $g\sim_{\mathcal{A}}f$, if there exist map germs of diffeomorphisms $\eta:(\mathbb{C}^2,0)\rightarrow (\mathbb{C}^2,0)$ and $\xi:(\mathbb{C}^3,0)\rightarrow (\mathbb{C}^3,0)$, such that $g=\xi \circ f \circ \eta$.\\

\noindent (b) $f:(\mathbb{C}^2,0) \rightarrow (\mathbb{C}^3,0)$ is finitely determined ($\mathcal{A}$-finitely determined) if there exists a positive integer $k$ such that for any $g$ with $k$-jets satisfying $j^kg(0)=j^kf(0)$ we have $g \sim_{\mathcal{A}}f$. 

\end{definition}

\begin{remark}\label{remarktriplepoints} Consider a finite map germ $f:(\mathbb{C}^2,0)\rightarrow (\mathbb{C}^3,0)$. By Mather-Gaffney criterion (\rm\cite[\textit{Theorem 2.1}]{ref20}\textit{), $f$ is finitely determined if and only if there is a finite representative $f:U \rightarrow V$, where $U\subset \mathbb{C}^2$, $V \subset \mathbb{C}^3$ are open neighbourhoods of the origin, such that $f^{-1}(0)=\lbrace 0 \rbrace$ and the restriction $f:U \setminus \lbrace 0 \rbrace \rightarrow V \setminus \lbrace 0 \rbrace$ is stable.} 

\textit{This means that the only singularities of $f$ on $U \setminus \lbrace 0 \rbrace$ are cross-caps (or Whitney umbrellas), transverse double and triple points. By shrinking $U$ if necessary, we can assume that there are no cross-caps nor triple points in $U$. Then, since we are in the nice dimensions of Mather (}\rm\cite[\textit{p. 208}]{mather}\textit{), we can take a stabilization of $f$, $F:U \times D \rightarrow \mathbb{C}^4$, $F(z,s)=(f_{s}(z),s)$ where $D$ is a neighbourhood of $0$ in $\mathbb{C}$. It is known that the number $T(f):= \sharp $ of triple points of $f_s$, for $s\neq 0$ and the number $C(f):= \sharp $ of cross-caps of $f_s$, are analytic invariants of $f$ and can be computed as (}\rm \cite[\textit{Th. 4.3}]{ref13} \textit{and} \rm\cite[Section 2]{ref12}\textit{):}

\begin{center}
 $  T(f)=dim_{\mathbb{C}}\dfrac{\mathcal{O}_3}{F_2(f_{\ast}\mathcal{O}_2)},   \ \ \ \ \  C(f)=dim_{\mathbb{C}}\dfrac{\mathcal{O}_2}{J(f)}$
\end{center}

\noindent where $J(f)$ is the ramification ideal of $f$. These numbers are finite, provided that $f$ is finitely determined.

\end{remark}

We note that the set $D(f)$ plays a fundamental role in the study of the finite determinacy. In \cite[Theorem 2.14]{ref7}, Marar and Mond presented necessary and sufficient conditions for a map germ $f:(\mathbb{C}^n,0)\rightarrow (\mathbb{C}^p,0)$ with corank $1$ to be finitely determined in terms of the dimensions of $D^2(f)$ and other multiple points spaces. In \cite{ref9}, Marar, Nu\~{n}o-Ballesteros and Pe\~{n}afort-Sanchis extended in some way this criterion of finite determinacy to the corank $2$ case. More precisely, they proved the following result:

\begin{theorem}\rm(\cite[Corollary 3.5]{ref9})\label{criterio} \textit{
Let $f:(\mathbb{C}^2,0)\rightarrow(\mathbb{C}^{3},0)$ be a finite and generically $1$ $-$ $1$ map germ. Then $f$ is finitely determined if and only if $\mu(D(f))$ is finite (equivalently, $D(f)$ is a reduced curve).}
\end{theorem}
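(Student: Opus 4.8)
The plan is to establish the chain of equivalences
\[
f \text{ finitely determined} \iff D^2(f) \text{ reduced} \iff D(f) \text{ reduced} \iff \mu(D(f))<\infty,
\]
reading the rightmost (parenthetical) equivalence first. Since $D^2(f)$ is Cohen--Macaulay of dimension $1$ (\cite[Prop. 2.1]{ref9}), the module $\pi_\ast\mathcal O_{D^2(f)}$ is a torsion Cohen--Macaulay $\mathcal O_2$-module of dimension $1$; over the regular ring $\mathcal O_2$ it then has projective dimension $1$ and, being of rank $0$, a \emph{square} presentation matrix, so that $\mathcal F_0(\pi_\ast\mathcal O_{D^2(f)})$ is principal and $D(f)=V(\lambda)$ is a hypersurface germ in $(\mathbb C^2,0)$. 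For a plane curve germ, $\mu(D(f))=\dim_{\mathbb C}\mathcal O_2/J(\lambda)$ is finite if and only if $\lambda$ is square-free, i.e.\ $D(f)$ is reduced; a multiple factor of $\lambda$ produces a one-dimensional singular locus and hence $\mu=\infty$. The same remark shows that $D(f)$ is reduced as soon as it is reduced on a punctured neighbourhood of $0$.

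Next I would transfer reducedness between the lifting and its image. The projection $\pi\colon D^2(f)\to D(f)$ is finite and generically one-to-one, and $D^2(f)$ is Cohen--Macaulay, hence pure-dimensional with no embedded components. If $D^2(f)$ is reduced, then over a generic point of each branch of $D(f)$ there sits a single reduced point of $D^2(f)$, so $\pi_\ast\mathcal O_{D^2(f)}$ is generically free of rank one and $\lambda$ is reduced; conversely, if $D^2(f)$ is non-reduced then, having no embedded points, it carries a branch of multiplicity $\geq 2$, which $\pi$ pushes to a multiple factor of $\lambda$. Thus $D^2(f)$ is reduced if and only if $D(f)$ is. Moreover, since $f^{-1}(0)=\{0\}$ the only point of $D^2(f)$ lying over the target origin is $(0,0)$, so $D^2(f)$ is a curve germ at that single point; being reduced is then equivalent to being smooth at every $(p,q)\neq(0,0)$ (smoothness gives $R_0$, Cohen--Macaulayness gives $S_1$, and Serre's criterion closes the loop).

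It remains to identify smoothness of $D^2(f)$ off $(0,0)$ with local stability, and here the Mather--Gaffney criterion (Remark \ref{remarktriplepoints}) reduces finite determinacy, since $f$ is finite, to $f$ being stable on $U\setminus\{0\}$. In the nice dimensions $(2,3)$ the stable multi-germs are immersions, cross-caps and transverse double and triple points, and for each of these one reads off directly that $D^2(f)$ is smooth at the corresponding point(s); this yields the implication ``finitely determined $\Rightarrow$ $\mu(D(f))<\infty$''. For the converse one must show that smoothness of $D^2(f)$ at a point off $(0,0)$ forces the underlying multi-germ to be stable---transversality of the sheets at an off-diagonal point and the cross-cap normal form along the diagonal---ruling out non-transverse contacts and excess multiple points. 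This matching of the local geometry of the multiple point space against the normal forms of stable maps is exactly the content of the Marar--Mond criterion \cite[Theorem 2.14]{ref7} in corank $1$ and of its extension in \cite{ref9} in general, and I expect it to be the main obstacle in the argument; granting it, $f$ is stable off $0$, hence finitely determined, completing the chain.
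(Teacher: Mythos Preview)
The paper does not prove this theorem; it is quoted verbatim as \cite[Corollary 3.5]{ref9} and used as a black box. There is therefore no ``paper's own proof'' to compare against: the author simply imports the result from Marar, Nu\~{n}o-Ballesteros and Pe\~{n}afort-Sanchis.

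Your sketch is a fair outline of how that result is actually obtained in \cite{ref9} (building on \cite{ref7}): one shows that $D^2(f)$ is Cohen--Macaulay of dimension $1$, that the Fitting-ideal structure forces $D(f)$ to be a plane-curve germ whose reducedness is equivalent to that of $D^2(f)$, and then links reducedness of $D^2(f)$ to stability of $f$ off the origin via Mather--Gaffney. You are also honest about where the real work lies: the converse direction (``$D^2(f)$ smooth off $(0,0)$ $\Rightarrow$ $f$ stable off $0$'') is precisely the substance of the Marar--Mond criterion in corank $1$ and of its extension in \cite{ref9}, and you do not reprove it but grant it. That is appropriate here, since the present paper does the same.

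One small correction: in your middle step you assert that $\pi\colon D^2(f)\to D(f)$ is ``generically one-to-one''. This is true, but the reason is not quite the one you give; the point is that for a generic $p\in D(f)$ there is a unique $q\neq p$ with $f(p)=f(q)$, so $\pi^{-1}(p)\cap D^2(f)=\{(p,q)\}$. The involution $(p,q)\mapsto(q,p)$ on $D^2(f)$ exchanges the two sheets over $f(D(f))$, but each sheet still maps bijectively to $D(f)$ under the first projection. With that clarified, your transfer of reducedness between $D^2(f)$ and $D(f)$ goes through (and is in fact the content of \cite[Prop.~3.3--3.4]{ref9}).
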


\section{Identification and fold components of $D(f)$}\label{quasihomog}

$ \ \ \ \ $ When $f:(\mathbb{C}^2,0)\rightarrow (\mathbb{C}^3,0)$ is finitely determined, the restriction of (a representative) $f$ to $D(f)$ is finite. In this case, $f_{|D(f)}$ is generically $2$-to-$1$ (i.e; $2$-to-$1$ except at $0$). On the other hand, the restriction of $f$ to an irreducible component $D(f)^i$ of $D(f)$ can be generically $1$-to-$1$ or $2$-to-$1$. This motivates us to give the following definition which is from \cite[Definition 4.1]{otoniel2} (see also \cite[Definition 2.4]{otoniel1}).

\begin{definition}\label{typesofcomp} Let $f:(\mathbb{C}^2,0)\rightarrow (\mathbb{C}^3,0)$ be a finitely determined map germ. Let $f:U\rightarrow V$ be a representative, where $U$ and $V$ are neighbourhoods of $0$ in $\mathbb{C}^2$ and $\mathbb{C}^3$, respectively, and consider an irreducible component $D(f)^j$ of $D(f)$.\\

\noindent (a) If the restriction ${f_|}_{D(f)^j}:D(f)^j\rightarrow V$ is generically $1$ $-$ $1$, we say that $D(f)^j$ is an \textit{identification component of} $D(f)$.\\ 

In this case, there exists an irreducible component $D(f)^i$ of $D(f)$, with $i \neq j$, such that $f(D(f)^j)=f(D(f)^i)$. We say that $D(f)^i$ is the \textit{associated identification component to} $D(f)^j$ or that the pair $(D(f)^j, D(f)^i)$ is a \textit{pair of identification components of} $D(f)$.  \\

\noindent (b) If the restriction ${f_|}_{D(f)^j}:D(f)^j\rightarrow V$ is generically $2$ $-$ $1$, we say that $D(f)^j$ is a \textit{fold component of} $D(f)$.
\end{definition}

The following example illustrates the two types of irreducible components of $D(f)$ presented in Definition \ref{typesofcomp}.

\begin{example}\label{example}
\textit{Let $f(x,y)=(x,y^2,xy^3-x^5y)$ be the singularity $C_5$ of Mond's list} \rm(\cite[p. 378]{ref12}). \textit{In this case, $D(f)=V(xy^2-x^5)$. Then $D(f)$ has three irreducible components given by}

\begin{center}
$D(f)^1=V(x^2-y), \ \ \ $ $D(f)^2=V(x^2+y) \ \ $ and $ \ \ D(f)^3=V(x)$. 
\end{center}

\textit{Notice that $D(f)^3$ is a fold component and $(D(f)^1$, $D(f)^2)$ is a pair of identification components. Also, we have that $f(D(f)^3)=V(X,Z)$ and $f(D(f)^1)=f(D(f)^2)=V(Y-X^4,Z)$ (see Figure \rm\ref{figura38}).}\\

\begin{figure}[h]
\centering
\includegraphics[scale=0.27]{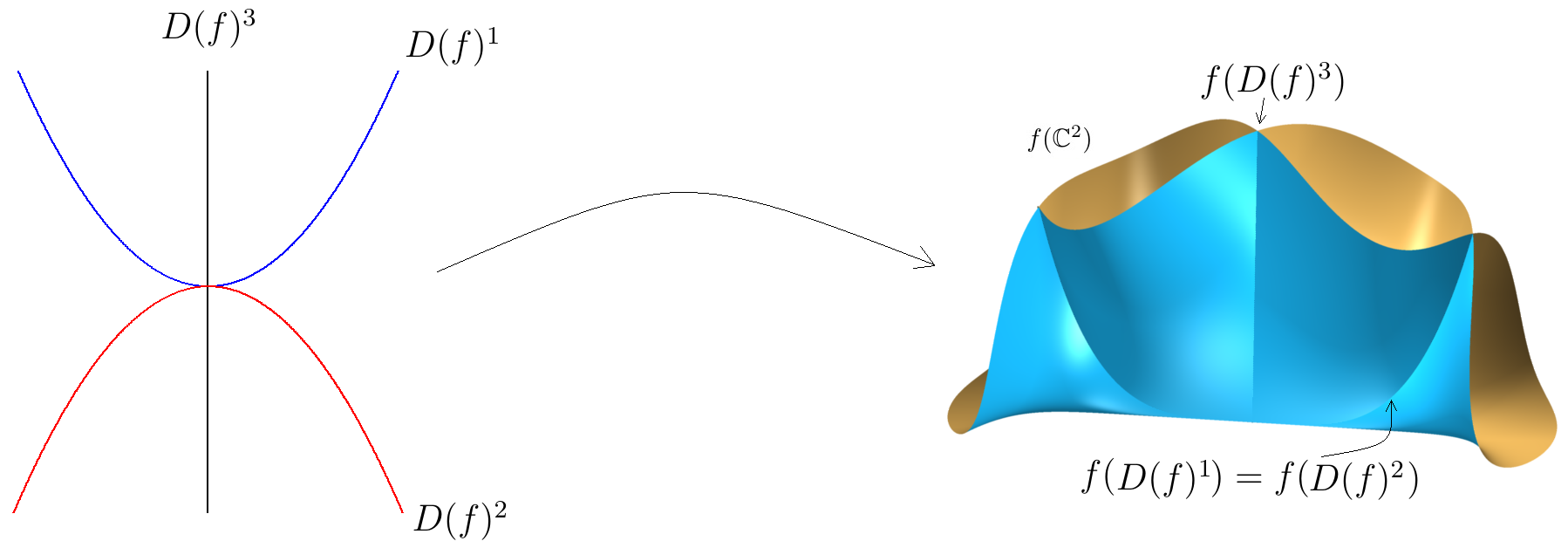}  
\caption{Identification and fold components of $D(f)$ (real points)}\label{figura38}
\end{figure}

\end{example}

\begin{remark} To compute $D(f)$ and $f(D(f))$, one can use Mond-Pellikaan's algorithm \rm\cite[\textit{Section 2}]{ref13} \textit{to find presentation matrices. For the computations in Example} \rm\ref{example}\textit{, we have made use of the software Singular} \rm\cite{singular} \textit{and the implementation of Mond-Pellikaan's algorithm given by Hernandes, Miranda, and Pe\~{n}afort-Sanchis in} \rm\cite{ref6}. \textit{At the webpage of Miranda} \rm\cite{ref10} \textit{one can find a Singular library to compute presentation matrices based on the results of} \rm\cite{ref6}. \textit{We also have made use of the software Surfer }\rm\cite{surfer}\textit{.} 
\end{remark}

We recall that a polynomial $p(x_1,\cdots,x_n)$ is \textit{quasihomogeneous} if there are positive integers $w_1,\cdots,w_n$, with no common factor and an integer $d$ such that $p(k^{w_1}x_1,\cdots,k^{w_n}x_x)=k^dp(x_1,\cdots,x_n)$. The number $w_i$ is called the weight of the variable $x_i$ and $d$ is called the weighted degree of $p$. In this case, we say $p$ is of type $(d; w_1,\cdots,w_n)$.

This definition extends to polynomial map germs $f:(\mathbb{C}^n,0)\rightarrow (\mathbb{C}^p,0)$ by just requiring each coordinate function $f_i$ to be quasihomogeneous of type $(d_i; w_1,\cdots,w_n)$. In particular, we say that $f:(\mathbb{C}^2,0)\rightarrow (\mathbb{C}^3,0)$ is quasihomogeneous of type $(d_1,d_2,d_3; w_1,w_2)$.

\begin{remark} Choose an integer $b$. Note that $g(x,y)=(x,y^2,xy)$ can be considered as a quasihomogeneous map germ of type $(1,2b,b+1; 1,b)$. To avoid any confusion, in this work we consider $g$ of type $(1,2,2; 1,1)$, that is, $b=1$.
\end{remark}

Given a finitely determined map germ $f:(\mathbb{C}^2,0)\rightarrow (\mathbb{C}^3,0)$, a natural question is: \textit{how many identification and fold components does the curve $D(f)$ have?}\\

In the case where $f(x,y)=(x,p(x,y),q(x,y))$ is homogeneous, Ruas and the author presented in \cite[Proposition 6.2]{otoniel1} an answer to this question in terms of the degrees of $p$ and $q$. In other words, \cite[Proposition 6.2]{otoniel1} give us an answer for the class of quasihomogeneous maps of type $(1,d_2,d_3; 1, 1)$. In  Proposition \ref{propdasformulas}, we extend this result for quasihomogeneous maps of type $(1,d_2,d_3; 1, w_2)$, with $w_2\geq 1$.

\begin{lemma}\label{lemapropformulas} Let $f:(\mathbb{C}^2,0)\rightarrow (\mathbb{C}^3,0)$ be a finitely determined map germ of corank $1$. Write $f$ in the form $f(x,y)=(x,p(x,y),q(x,y))$ and suppose that $p$ and $q$ are quasihomogeneous functions of types $(d_2; 1,b)$ and $(d_3; 1, b)$, respectively,  with $b \geq 1$. Let $f:U\rightarrow W$ a small representative. Then\\

\noindent (a) $D(f)=V(\lambda(x,y))$, where $\lambda$ is a quasihomogeneous polynomial of type $(d; 1, b)$ where $d= \dfrac{d_2d_3}{b}-d_2-d_3+b$. Furthermore, $b$ divides $d_2$ or $d_3$.\\ 

\noindent (b) The polynomial $\lambda$ can take the following forms:

 \begin{center}
(I) $\lambda(x,y)=\displaystyle { \prod_{i=1}^{d/b}}(y-\alpha_i x^b)$, $ \ \ \ \ \  \ $ (II) $\lambda(x,y)= x \left( \displaystyle {   \prod_{i=1}^{\frac{d-1}{b}}}(y-\alpha_i x^b) \right) $ $ \ \ \ \ $ or $ \ \ \  \ $ (III) $\lambda(x,y)=x$,
\end{center}

\noindent where $\alpha_{1},\cdots, \alpha_{d/b}$ (respectively, $\alpha_{1},\cdots, \alpha_{(d-1)/b}$) are all distinct in (I) (respectively in (II)).\\ 

\noindent (c) Let $\mathscr{C}_{\alpha}:=V(y-\alpha x^b)$ and $\mathscr{C}:=V(x)$ be plane curves, with $\alpha \in \mathbb{C}$. If $\mathscr{C}_{\alpha} \subset D(f)$ (respectively $\mathscr{C} \subset D(f)$), then $\mathscr{C}_{\alpha}$ is an identification component of $D(f)$ (respectively, $\mathscr{C}$ is a fold component of $D(f)$). 

\end{lemma}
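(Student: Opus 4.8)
The plan is to analyze the double point space $D(f)$ by exploiting the quasihomogeneity of $p$ and $q$, using the explicit description of $D(f)$ via the Fitting ideal of $\pi_*\mathcal{O}_{D^2(f)}$. Since $f$ has corank $1$ and is written as $f(x,y)=(x,p(x,y),q(x,y))$, the lifting $D^2(f)$ is governed by the ``divided differences'' of $p$ and $q$: writing $p(x,y)-p(x,y') = (y-y')\,\overline{p}(x,y,y')$ and similarly $q(x,y)-q(x,y') = (y-y')\,\overline{q}(x,y,y')$ (the first coordinate being $x$ forces $x=x'$ on $D^2(f)$), one has $D^2(f) = V(\overline{p},\overline{q}) \subset \mathbb{C}^3_{x,y,y'}$, and $D(f) = \pi(D^2(f))$ with the Fitting structure given by the resultant-type determinant. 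Concretely, $\lambda(x,y)$ is obtained as $\operatorname{Res}_{y'}(\overline{p},\overline{q})$ up to units, or via the $2\times 2$ minors of the presentation matrix from Mond–Pellikaan's algorithm. The first step is to make this explicit and read off the weighted degree.

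For part (a): since $p,q$ are quasihomogeneous of types $(d_2;1,b)$ and $(d_3;1,b)$, the divided differences $\overline{p},\overline{q}$ are quasihomogeneous in $(x,y,y')$ of types $(d_2-b;1,b,b)$ and $(d_3-b;1,b,b)$ respectively (giving $y'$ weight $b$). Then $D^2(f)$ is a quasihomogeneous curve, and eliminating $y'$ produces a quasihomogeneous $\lambda(x,y)$ of type $(d;1,b)$. The value $d = d_2 d_3/b - d_2 - d_3 + b$ I would get by a degree count on the elimination/resultant: the resultant of two quasihomogeneous polynomials of $y'$-degrees roughly $(d_2-b)/b$ and $(d_3-b)/b$ in a variable of weight $b$ has weighted degree $(d_2-b)(d_3-b)/b \cdot$ (appropriate normalization), and one checks this simplifies to the stated formula; alternatively one can match it against the known homogeneous case $b=1$ (where it recovers the formula of \cite[Prop. 6.2]{otoniel1}) and against the fact that $D(f)$ must be reduced of the correct $\mu$. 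The divisibility claim ``$b$ divides $d_2$ or $d_3$'' should fall out of requiring that at least one of $p,q$ genuinely involves $y$ in a way compatible with finite determinacy — more precisely, since $\lambda$ is a polynomial with integer exponents on $y$ appearing only through powers of $x^b$ (by quasihomogeneity each monomial $x^a y^c$ of $\lambda$ satisfies $a + bc = d$), $d$ itself is constrained, and pushing the quasihomogeneity of $p$ or $q$ through forces $b \mid d_2$ or $b \mid d_3$.

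For part (b): a quasihomogeneous polynomial $\lambda$ of type $(d;1,b)$ is, up to a nonzero constant, a product of factors each of which is quasihomogeneous and irreducible — and the only quasihomogeneous irreducibles of type $(\cdot;1,b)$ are $x$ (weight $1$) and the binomials $y - \alpha x^b$ (weight $b$), since any monomial $x^a y^c$ in $\lambda$ has $a+bc=d$ so $\lambda(x,y) = \sum_j c_j x^{d-jb} y^j = x^{d-b\lfloor d/b\rfloor}\cdot(\text{polynomial in }y/x^b \text{ homogenized})$. Hence $\lambda = x^\epsilon \prod_i (y-\alpha_i x^b)$ with $\epsilon \in \{0,1\}$ (higher powers of $x$, or repeated binomial factors, would make $D(f)$ non-reduced, contradicting finite determinacy via Theorem \ref{criterio}); the three cases (I), (II), (III) are exactly $\epsilon=0$, $\epsilon=1$ with positive binomial part, and $\epsilon=1$ with empty binomial part. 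The distinctness of the $\alpha_i$ is again reducedness.

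For part (c): I need to determine, for each irreducible component, whether $f$ restricted to it is generically one-to-one (identification) or two-to-one (fold). Parametrize $\mathscr{C}_\alpha = V(y-\alpha x^b)$ by $x \mapsto (x,\alpha x^b)$; since $(x,y,x',y') \in D^2(f)$ with $(x,y)\neq(x',y')$ forces $x=x'$ and $y'\neq y$ a second root, a point of $\mathscr{C}_\alpha$ is identified by $f$ with the point $(x, y')$ where $y'$ is the ``partner'' root — and by the product structure of $\lambda$, for $(x,\alpha x^b)$ the partner lies on another component $\mathscr{C}_{\alpha'}$ (or on $\mathscr{C}$), so $f$ is generically one-to-one on $\mathscr{C}_\alpha$, i.e., it is an identification component. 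For $\mathscr{C}=V(x)$: here $x=x'=0$ and the two preimages $(0,y)$, $(0,y')$ of a generic target point both lie on $\mathscr{C}$ itself (the fiber of $f$ over $f(0,y)$ meets $\{x=0\}$ in two points related by the involution swapping the two roots of $p(0,\cdot)-c = 0 = q(0,\cdot)-c'$), so $f|_{\mathscr{C}}$ is generically two-to-one, making it a fold component. One must also rule out that $\mathscr{C}_\alpha$ could be a fold (which would require $p(x,\cdot), q(x,\cdot)$ to have a nontrivial symmetry fixing $y-\alpha x^b$ generically) — this is incompatible with the binomial having multiplicity one in $\lambda$.

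The main obstacle I anticipate is part (a): pinning down the exact formula $d = d_2d_3/b - d_2 - d_3 + b$ and the divisibility statement requires carefully tracking the Fitting-ideal (rather than naive set-theoretic) structure of $D(f)$ through the elimination of $y'$, since the presentation matrix of $\pi_*\mathcal{O}_{D^2(f)}$ over $\mathcal{O}_2$ and its $0$-th Fitting ideal must be computed in a quasihomogeneous-weighted manner; the degree bookkeeping is delicate because $D^2(f)$ may itself need care when $b \nmid d_2$ and $b\nmid d_3$ simultaneously (which the divisibility claim is meant to exclude). The identification-versus-fold dichotomy in (c), by contrast, is essentially a matter of organizing the combinatorics of how the roots of the univariate system pair up, and should be routine once the product decomposition in (b) is in hand.
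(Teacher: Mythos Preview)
Your approach to (b) is essentially the same as the paper's: factor $\lambda$ into irreducible quasihomogeneous pieces (which, with weights $(1,b)$, can only be $x$ or $y-\alpha x^b$), and use reducedness (Theorem~\ref{criterio}) to force distinct factors with $x$ appearing at most once.

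For (a), however, the paper does not attempt the resultant computation you sketch; it simply cites Mond \cite[Prop.~1.15]{mond3} for the quasihomogeneity and the value of $d$, and \cite[Lemma~1.13]{mond3} (together with the zero-dimensionality of $V(F_2(f_*\mathcal{O}_2))$, i.e.\ finiteness of triple points) for the divisibility $b\mid d_2$ or $b\mid d_3$. Your justification of the divisibility claim (``pushing the quasihomogeneity of $p$ or $q$ through'') is not an argument, and your degree computation is only a heuristic (``match it against the homogeneous case''); so as written, (a) has a gap that the paper fills by citation rather than by direct proof.

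For (c), your plan works but is considerably more elaborate than necessary. The paper's argument for $\mathscr{C}_\alpha$ is a single line: parametrizing by $\varphi_\alpha(u)=(u,\alpha u^b)$, the composite $f\circ\varphi_\alpha(u)=(u,p(u,\alpha u^b),q(u,\alpha u^b))$ has first coordinate $u$ and is therefore injective --- no partner-tracking is needed. For $\mathscr{C}=V(x)$, the paper does not argue directly that $f|_{\mathscr{C}}$ is $2$-to-$1$; instead it observes that $f(\mathscr{C})\subset\{X=0\}$ while $f(\mathscr{C}_\alpha)$ has $X$-coordinate generically nonzero, so $f(\mathscr{C})\neq f(\mathscr{C}_\alpha)$ for every $\alpha$. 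Since every other component of $D(f)$ is some $\mathscr{C}_\alpha$, the curve $\mathscr{C}$ has no possible identification partner and must therefore be a fold. Your direct approach (partners of points on $\mathscr{C}$ stay on $\mathscr{C}$ because the first coordinate forces $x'=0$) is valid too, but the paper's exclusion argument is shorter.
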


\begin{proof} The first part of (a) is proved in \cite[Prop. 1.15]{mond3}. We have that $V(F_2(f_{\ast}\mathcal{O}_2))$ is zero dimensional by Remark \ref{remarktriplepoints}, thus it follows by \cite[Lemma 1.13]{mond3} that $b$ divides $d_2$ or $d_3$.\\

\noindent (b) By (a) we have that $D(f)=V(\lambda(x,y))$, where $\lambda(x,y)$ is a quasihomogeneous polynomial of type $(1,b;d)$. 
It is not hard to see that the only irreducible quasihomogeneous polynomials in $\mathbb{C}[x,y]$ with weights $w(x)=1$ and $w(y)=b$ are $x$, $y$ and $y-\gamma x^b$, with $0\neq \gamma \in \mathbb{C}$. Since the ring of polynomials $\mathbb{C}[x,y]$ is an unique factorization domain, each irreducible factor of $\lambda$ is on the form $x$ or $y-\gamma x^b$ (with $\gamma$ not necessarily non-zero). Since $f$ is finitely determined, by Theorem \ref{criterio} $\lambda$ is reduced, that is, the factors of $\lambda$ are all distinct. So the function $\lambda$ can take the following forms:  

 \begin{center}
(I) $\lambda(x,y)=\displaystyle { \prod_{i=1}^{r_1}}(y-\alpha_i x^b)$, $ \ \ \ \ \  \ $ (II) $\lambda(x,y)= x \left( \displaystyle {   \prod_{i=1}^{r_2}}(y-\alpha_i x^b) \right) $ $ \ \ \ \ $ or $ \ \ \  \ $ (III) $\lambda(x,y)=x$,
\end{center}

\noindent where $\alpha_{1},\cdots, \alpha_{r_1}$ (respectively, $\alpha_{1},\cdots, \alpha_{r_2}$) are all distinct in (I) (respectively in (II)). Now, since $\lambda$ is quasihomogeneous of type $(1,b;d)$, thus $r_1=d/b$ and $r_2=(d-1)/b$.\\

\noindent (c) Consider the parametrizations $\varphi_{\alpha},\varphi: W \rightarrow U \subset \mathbb{C}^2$ of $\mathscr{C}_{\alpha}$ and $\mathscr{C}$, defined by $\varphi_{\alpha}(u)=(u,\alpha u^b)$ and $\varphi(u):=(0,u)$, respectively, where $W$ is a neighborhood of $0 \in \mathbb{C}$. Note that $f \circ \varphi_{\alpha} =(u,p(u, \alpha u^b),q(u, \alpha u^b))$ is $1$-to-$1$. Thus if $\mathscr{C}_{\alpha} \subset D(f) $ then $\mathscr{C}_{\alpha}$ is an identification component of $D(f)$. On the other hand, note that $f \circ \varphi =(0,p(0, \alpha u^b),q(0, \alpha u^b))$, thus $f(\mathscr{C}_{\alpha}) \neq f(\mathscr{C})$ for all $\alpha \in \mathbb{C}$. Hence, if $\mathscr{C} \subset D(f)$ then $\mathscr{C}$ necessarily is a fold component of $D(f)$.\end{proof}\\

Now, we present the main result of this section.

\begin{proposition}\label{propdasformulas} Let $f,b,d_2,d_3$ and $\lambda$ be as in Lemma \ref{lemapropformulas}.\\

\noindent (a) $\lambda$ can be written in the form (III) (ie; $\lambda(x,y)=x$) if and only if $f$ is $\mathcal{A}$-equivalent to $g(x,y)=(x,y^2,xy)$. In this case $V(x)$ is a fold component of $D(f)$, which is irreducible.\\

\noindent (b) Suppose that $b$ is odd.\\

(b.1) $\lambda$ can be written in the form (I) if and only if $d_2$ or $d_3$ is odd. In this case, $D(f)$ has $d/b$ identification components and there are no fold components.

(b.2) $\lambda$ can be written in the form (II) if and only if $d_2$ and $d_3$ are both even and $d \neq 1$. In this case, $D(f)$ has $(d-1)/b$ identification components and $V(x)$ is the only fold component of $D(f)$.\\

\noindent (c) Suppose that $b$ is even. In this case, either $d_2$ is even or $d_3$ is even.\\

(c.1) $\lambda$ can be written in the form (I) if and only if $d_2$ and $d_3$ are both even. In this case, $D(f)$ has $d/b$ identification components and there are no fold components.

(c.2) $\lambda$ can be written in the form (II) if and only if $d_2$ and $d_3$ have different parity and $d\neq 1$. In this case, $D(f)$ has $(d-1)/b$ identification components and $V(x)$ is the only fold component of $D(f)$.

\end{proposition}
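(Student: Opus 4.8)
The plan is to reduce the whole statement to deciding, in terms of $b,d_2,d_3$, which of the three shapes (I), (II), (III) of Lemma \ref{lemapropformulas}(b) the polynomial $\lambda$ takes; once the shape is known, every component count in the statement is immediate from Lemma \ref{lemapropformulas}(c), since each factor $y-\alpha_i x^b$ of $\lambda$ is an identification component, the $\alpha_i$ are pairwise distinct, and $V(x)$ is the only component that can be a fold component. The first ingredient I would isolate is that \emph{the number of identification components of $D(f)$ is always even}: if $D(f)^j$ is an identification component with associated component $D(f)^i$, then $D(f)^i$ is again an identification component, for otherwise $f$ restricted to $D(f)^i\cup D(f)^j$ would be generically at least $3$-to-$1$ onto $f(D(f)^j)$, producing a curve of triple points and contradicting that a finitely determined $f$ has only isolated triple points (Remark \ref{remarktriplepoints}). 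The same argument shows the associated component is unique, so identification components occur in disjoint pairs, whence $d/b$ (in shape (I)) and $(d-1)/b$ (in shapes (II), (III)) are even.

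Next I would normalize. By Lemma \ref{lemapropformulas}(a) one has $b\mid d_2$ or $b\mid d_3$, and since the statement is symmetric in $d_2,d_3$ (note $(x,p,q)\sim_{\mathcal{A}}(x,q,p)$) I may assume $b\mid d_2$ and write $d_2=bm_2$; an elementary computation from $d=d_2d_3/b-d_2-d_3+b$ gives $d=(m_2-1)(d_3-b)$, and since $f$ has corank $1$, hence is not an immersion, one checks $m_2\ge 2$, $d_3\ge b+1$, so $d\ge 1$. The heart of the argument is the local analysis of $f|_{V(x)}\colon y\mapsto\bigl(0,p(0,y),q(0,y)\bigr)$, where $p(0,y)=c_p y^{m_2}$ and $q(0,y)=c_q y^{d_3/b}$ if $b\mid d_3$ while $q(0,y)=0$ if $b\nmid d_3$; here $c_p,c_q$ may vanish, but not both, since $f^{-1}(0)=\{0\}$. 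Finite determinacy forces $f|_{V(x)}$ to be generically at most $2$-to-$1$ (no curve of triple points), and by Lemma \ref{lemapropformulas}(c) one has $x\mid\lambda$ precisely when $f|_{V(x)}$ is generically $2$-to-$1$, because $V(x)$, if contained in $D(f)$, must be a fold component ($f(V(x))\ne f(V(y-\alpha x^b))$ for every $\alpha$, by the proof of Lemma \ref{lemapropformulas}(c)). Reading off the generic number of preimages of the monomial image curve as the relevant $\gcd$ of exponents, together with $m_2\ge 2$, I would conclude: if $b\nmid d_3$ then necessarily $m_2=2$, $d_2=2b$ and $x\mid\lambda$; if $b\mid d_3$ then either $\gcd(m_2,d_3/b)=1$, $f|_{V(x)}$ is $1$-to-$1$, $x\nmid\lambda$ and $\lambda$ has shape (I), or one is reduced (after excluding the configurations that fail to be finitely determined) to $d_2=2b$, or symmetrically $d_3=2b$.

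Whenever $x\mid\lambda$ I would use the reduction $f\sim_{\mathcal{A}}(x,y^2,q(x,y))$, valid when $d_2=2b$ with $c_p\ne0$ after a short sequence of quasihomogeneous coordinate changes in source and target (symmetrically $(x,p(x,y),y^2)$). Writing $q=q_{\mathrm{ev}}(x,y^2)+y\,q_{\mathrm{odd}}(x,y^2)$ and eliminating $y'$ on $D^2(f)$ via $y'=-y$ yields $D^2(f)=V\bigl(y+y',\,q_{\mathrm{odd}}(x,y^2)\bigr)$, hence $\lambda(x,y)=q_{\mathrm{odd}}(x,y^2)$ up to a unit; thus $\lambda$ is even in $y$, so its factors $y-\alpha x^b$ come in pairs $\{y-\alpha x^b,\,y+\alpha x^b\}$ with $\alpha\ne0$ (reducedness of $\lambda$, Theorem \ref{criterio}), which both recovers the even count and forces $d_3\equiv 1\pmod{b}$ with $(d_3-1)/b$ odd. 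From $d=d_3-b$ and $d_2=2b$ a parity count then gives: $d_3$ is even $\iff$ $b$ is odd, which together with $d_2$ even is exactly the dichotomy (b.2)/(c.2). The extreme case is $d=1$, i.e.\ $d_3=b+1$: then $q$ is of type $(b+1;1,b)$, so $q=\alpha x^{b+1}+\beta xy$ with $\beta\ne0$ (otherwise $f$ is not finitely determined), and the same normalization gives $f\sim_{\mathcal{A}}(x,y^2,xy)=g$. Conversely $f\sim_{\mathcal{A}}g$ forces $D(f)\cong D(g)=V(x)$, an irreducible smooth curve, so $\lambda$ has a single irreducible factor, which by the even-count remark cannot be of the form $y-\alpha x^b$ and hence equals $x$; this proves (a). Assembling the cases then yields (b) and (c), the component counts following from Lemma \ref{lemapropformulas}(c).

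The step I expect to be the real obstacle is the finite-determinacy bookkeeping buried in the middle: there are triples $(d_2,d_3,b)$ for which $f|_{V(x)}$ is at most $2$-to-$1$ yet $f$ is \emph{not} finitely determined --- essentially the germs $\mathcal{A}$-equivalent to $B_\infty$-type models such as $(x,y^2,x^{k}y)$ with $k\ge 2$ --- and these must be discarded using reducedness of $\lambda$ (Theorem \ref{criterio}); equivalently, for $b\ge 2$ such configurations would force $\lambda$ into a shape incompatible with $b\mid d$ or $b\mid d-1$, contradicting Lemma \ref{lemapropformulas}(b). Making this pruning precise --- i.e.\ showing that finite determinacy together with quasihomogeneity of type $(1,d_2,d_3;1,b)$ leaves exactly the triples matching the stated parity conditions --- is where the care is needed; the remaining pieces are the arithmetic of $d=(m_2-1)(d_3-b)$ and the pairing of identification components.
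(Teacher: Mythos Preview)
Your overall strategy---pair up identification components, then decide from $b,d_2,d_3$ which of the shapes (I), (II), (III) occurs---is exactly the paper's. But the paper organises it more simply, and your execution has a genuine gap in the step you yourself flag as the obstacle.

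The paper converts the pairing observation directly into the trichotomy
\[
\text{shape (I)} \Longleftrightarrow d \text{ even},\qquad
\text{shape (II)} \Longleftrightarrow d \text{ odd and } d\ne 1,\qquad
\text{shape (III)} \Longleftrightarrow d=1,
\]
since in shape (I) the even integer $d/b$ forces $d$ even, in shapes (II)/(III) the even integer $(d-1)/b$ forces $d$ odd, and exactly one shape occurs. Parts (b) and (c) then become pure parity arithmetic on $d=(m_2-1)(d_3-b)$; the only geometric input beyond the pairing is to rule out one configuration when $b$ is even (the paper's ``Case (4)''), and this uses precisely the $f|_{V(x)}$ analysis you sketch, applied once. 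For (a) the paper argues $D(f)=V(x)\Rightarrow \mu(D(f))=0\Rightarrow \mu(D^2(f))=0\Rightarrow f$ stable $\Rightarrow f\sim_{\mathcal A}(x,y^2,xy)$, which is shorter than your explicit normalisation (though yours is correct: $d=1$ genuinely forces $m_2=2$, $d_3=b+1$).

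Your route instead tries to reduce every instance of $x\mid\lambda$ to the normal form $(x,y^2,q)$, i.e.\ to $d_2=2b$ (or symmetrically $d_3=2b$), and then read off parities from the explicit $\lambda=q_{\mathrm{odd}}(x,y^2)$. This reduction is \emph{not valid}. For $b$ odd there exist finitely determined germs with $b\mid d_2$, $b\mid d_3$, both pure-$y$ coefficients nonzero, $\gcd(m_2,m_3)=2$ and $m_2,m_3>2$---for instance $b=1$, $d_2=4$, $d_3=6$ with generic $p,q$. For these $f|_{V(x)}$ is $2$-to-$1$, so $x\mid\lambda$, yet neither $d_2$ nor $d_3$ equals $2b$, and no quasihomogeneous change of coordinates brings $p$ to $y^2$. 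Your final paragraph anticipates trouble here but misdiagnoses it: the issue is not that these germs fail to be finitely determined (they need not), it is that your normal-form step simply does not apply to them. The repair is to bypass the normal form and argue from the parity of $d$ as the paper does: in this sub-case $d=(m_2-1)(m_3-1)b$ with $m_2-1,m_3-1$ odd, so $d$ has the parity of $b$; for $b$ odd this gives $d$ odd and $d_2,d_3$ both even, matching (b.2), while for $b$ even it gives $d$ even, contradicting $x\mid\lambda$, so that sub-case is genuinely excluded by finite determinacy.
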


\begin{proof} (a) Consider the map germ $g(x,y)=(x,y^2,xy)$. Note that $D(g)=V(x)$. On the other hand, suppose that $D(f)=V(x)$, thus $\mu(D(f))=0$. By \cite[Theorem 3.4]{ref7} (where $D(f)$ and $D^2(f)$ are denoted by $D^2(f)$ and $\tilde{D}^2(f)$, respectively), we have that $\mu(D^2(f))=0$, so $D^2(f)$ is a smooth curve. Hence $f$ is stable by \cite[Proposition 2.14]{ref7}. It is well known that the only stable map germ from $\mathbb{C}^2$ to $\mathbb{C}^3$ is $g$ (up $\mathcal{A}$-equivalence), it follows that $f\sim_{\mathcal{A}}g$.\\

We recall also that the number of identification components of $D(f)$ is always even, thus by Lemma \ref{lemapropformulas} (c) we have that:\\

\noindent\textbf{Fact 1:} $\lambda$ can be written in the form (I) if and only if $d$ is even.

\noindent\textbf{Fact 2:} $\lambda$ can be written in the form (II) if and only if $d$ is odd and $d \neq 1$.\\

In order to prove (b) and (c), we have also the obvious fact:\\

\noindent\textbf{Fact 3:} $\lambda$ can be written in the form (III) if and only if $d=1$.\\

\noindent (b) By Prop. \ref{propdasformulas} (a), we have that $b$ divides $d_2$ or $d_3$. Without loss of generality we can assume that $b$ divides $d_2$ and set $d_2=nb$. Thus it easy to see that $d=nd_3-nb-d_3+b$ is odd if and only if $n$ and $d_3$ are both even. Since $b$ is odd, $d_2$ is even if and only if $n$ is even. Thus, $d$ is odd if and only if $d_2$ and $d_3$ are both even. Now, the statements (b.1) and (b.2) follows by Facts (I) and (II).\\

\noindent (c) By Prop. \ref{propdasformulas} (a), we have that $b$ divides $d_2$ or $d_3$. Suppose first that $b$ divides $d_2$ and set $d_2=nb$, in particular, $d_2$ is even. Thus we have the following cases illustrated in Table \ref{tabela1}:

\begin{table}[!h]
\caption{Cases of parity}\label{tabela1}
\centering 
{\def\arraystretch{2}\tabcolsep=10pt
\begin{tabular}{@{}l | c  |  c | c | c @{}}
\hline
Case &  $d_2$ & $n$ & $d_3$ & $d=nd_3-nb-d_3+b$\\
\hline
Case (1) & even & even & even & even \\

Case (2) & even & even & odd & odd \\

Case (3) & even & odd & even & even \\

Case (4) & even & odd & odd & even \\

\end{tabular}
}
\end{table}

Thus $d$ is odd if and only if ($d_2$ is even), $n$ is even and $d_3$ is odd. We affirm the following:\\

\noindent \textbf{Statement:} There is no finitely determined map germ $f$ in the conditions of the case (4).\\ 

\noindent \textit{Proof of the Statement:} Since $d$ is even, by Lemma \ref{lemapropformulas} (c) we have that each irreducible component of $D(f)$ is an identification component. Write $f$ in the form

\begin{center}
$f(x,y)=(x \ , \ x\tilde{p}(x,y) + \alpha y^n \ , \ x\tilde{q}(x,y)+ \beta y^{m})$
\end{center}

\noindent that is, $p(x,y)=x \tilde{p}(x,y) + \alpha y^n$ and $q(x,y)=x \tilde{q}(x,y)+ \beta y^{m}$. Notice that $\alpha \neq 0$ or $\beta \neq 0$, otherwise $f$ is not finite. We can suppose that $2\leq n,m$, otherwise $corank(f)=0$. Consider the curve $\mathscr{C}=V(x)$ and the following parametrization $\varphi(u)=(0,u)$ of $\mathscr{C}$. Note that $(f \circ \varphi ) (u) =(0,\alpha y^n, \beta y^m)$. Since $D(f)$ has no fold components, $\mathscr{C}$ is not an irreducible component of $D(f)$. Since $f$ is $1$-to-$1$ outside $D(f)$, $f\circ \varphi$ is $1$-to-$1$. This implies that $\beta \neq 0$ (and $gcd(n,m)=1$). Since $\beta \neq 0$, we have that $d_3=mb$ which is even, a contradiction.\\ 

Thus, since the case (4) does not occurs, we have that:

\begin{flushleft}
(1) $d$ is odd if and only if ($d_2$ is even) and $d_3$ is odd.\\

(2) $d$ is even if and only if ($d_2$ is even) and $d_3$ is even. 
\end{flushleft}

In a similar way, if we suppose that $b$ divides $d_3$, we conclude that 

\begin{flushleft}
(1) $d$ is odd if and only if ($d_3$ is even) and $d_2$ is odd.\\

(2) $d$ is even if and only if ($d_3$ is even) and $d_2$ is even. 
\end{flushleft}

Now, the statements (c.1) and (c.2) follows by Facts (I), (II) and (III). Finally, we remark that the statement on the number of identification and fold components in (b.1), (b.2), (c.1) and (c.2) follows by Lemma \ref{lemapropformulas} (c).\end{proof}

\begin{example}\label{example2} Consider $f(x,y)=(x,y^2,xy^3-x^5y)$ the map germ of Example \rm\ref{example}. \textit{Note that $f$ is quasihomogeneous of type $(1,4,7; 1,2)$. By Proposition \ref{propdasformulas} (c.2) we conclude that $D(f)$ has two identification components and $V(x)$ is the only fold component, exactly according to what we present in Example} \rm\ref{example}.
\end{example}

\section{Equisingularity in families of map germs}\label{sec:3}

$ \ \ \ \ $ Let $f:(\mathbb{C}^2,0)\rightarrow (\mathbb{C}^3,0)$ be a finitely determined map germ. Let $F:(\mathbb{C}^{2} \times \mathbb{C},0)\rightarrow (\mathbb{C}^3 \times \mathbb{C},0)$ be a $1$-parameter unfolding of $f$ defined by $F(x,t)=(f_{t}(x),t)$. We assume that the origin is preserved, that is, $f_{t}(0)=0$ for all $t$. 

As in \cite[Section 4]{ref1}, in a similar way we can also define the double point space $D^2(F)$ of $F$, which in this work, is considered as a family of curves in $(\mathbb{C}^2 \times \mathbb{C}^2 \times \mathbb{C},0)$ (instead of $(\mathbb{C}^3 \times \mathbb{C}^3,0)$). We consider also the other double point spaces $D(F)$ in $(\mathbb{C}^2 \times \mathbb{C},0)$ and $F(D(F))$ in $(\mathbb{C}^3 \times \mathbb{C},0)$ (again both as families of curves).

Consider a small representative $F:U \times T \rightarrow V \times T$ of $F$, where $U \times T$, $V \times T$ and $T$ are neighborhoods of $0$ in $\mathbb{C}^2 \times \mathbb{C}$, $\mathbb{C}^3 \times \mathbb{C}$ and $\mathbb{C}$, respectively. Gaffney defined in \cite[Def. 6.2]{gaffney} the excellent unfoldings. An excellent unfolding has a natural stratification whose strata in the complement of the parameter space $T$ are the stable types in source and target. In our case, the strata in the source are the following:

\begin{equation}\label{eq111}
\lbrace U \times T \setminus D(F), \ D(F) \setminus T, \ T \rbrace.
\end{equation}

\noindent where $D(F)$ denotes the set of double point space of $F$ (the representative of the germ $F$). In the target, the strata are

\begin{equation}\label{eq222}
\lbrace V \times T \setminus F(U \times T), \  F(U \times T) \setminus \overline{F(D(F))}, \  F(D(F)) \setminus T, \ T \rbrace.\\
\end{equation}

Notice that $F$ preserves the stratification, that is, $F$ sends a stratum into a stratum.

\begin{definition}\label{defequisingularity} Let $F:(\mathbb{C}^2 \times \mathbb{C},0)\rightarrow (\mathbb{C}^3 \times \mathbb{C},0)$ be a $1$-parameter unfolding of a finitely determined map germ $f:(\mathbb{C}^2,0)\rightarrow(\mathbb{C}^3,0)$.\\ 

\noindent (a) We say that $F$ is \textit{topologically trivial} if there are germs of homeomorphisms:
\[
\Phi :(\mathbb{C}^2 \times \mathbb{C},0)\rightarrow (\mathbb{C}^2 \times \mathbb{C},0),  \ \Phi(x,t)=(\phi_{t}(x),t), \ \phi_{0}(x)=x, \ \phi_{t}(0)=0 
\]
\[
\Psi:(\mathbb{C}^3 \times \mathbb{C},0)\rightarrow (\mathbb{C}^3 \times \mathbb{C},0), \ \Psi(x,t)=(\psi_{t}(x),t), \ \psi_{0}(x)=x, \ \psi_{t}(0)=0
\]
\noindent such that $I=\Psi^{-1} \circ F \circ \Phi$, where $I(x,t)=(f(x),t)$ is the trivial unfolding of $f$.\\

\noindent (b) We say that $F$ is \textit{Whitney equisingular} if there is a representative of $F$ such that the stratifications in (\ref{eq111}) and (\ref{eq222}) are Whitney regular along $T$.
\end{definition}

\begin{remark}\label{remarkprincipal}(a) We have that the projections of $\pi_1:D(F)\rightarrow T$ and $\pi_2:F(D(F))\rightarrow T$ into the parameter space $T$ are flat deformations of $D(f)$ and $f(D(f))$ (see \rm\cite[\textit{Lemma 4.2}]{ref9}\textit{), that is, $D(F)$ and $F(D(F))$ are flat families of reduced curves in the sense of} \rm\cite[\textit{Section 5}]{buch}.\\

\noindent \textit{(b) Suppose that $F$ is topologically trivial. Since the representatives of the germs of homeomorphisms $\Phi$ and $\Psi$ (in Def. \ref{defequisingularity}(a)) must preserve the double point curves, then $D(F)$ and $F(D(F))$ are topologically trivial families of curves, that is, there are homeomorphisms} 

\begin{center}
 $\Phi: D(F) \rightarrow D(f) \times T$ $ \ \ \ $ \textit{and} $ \ \ \ $ $\Psi: F(D(F)) \rightarrow f(D(f)) \times T$,
 \end{center} 

\noindent \textit{with $\pi_1^{'} \circ \Phi=\pi_1$ and $\pi_2^{'} \circ \Psi=\pi_2$, where $p_1^{'}$ and $p_2^{'}$ are the canonical projections on the second factor of $D(f) \times T$ and $f(D(f)) \times T$, respectively.}\\

\noindent \textit{(c) By Thom's second isotopy lemma for complex analytic maps (} \rm\cite[\textit{Th. 5.2}]{ref5}\textit{), every unfolding $F$ of $f$ which is Whitney equisingular is also topologically trivial. However, we know that the converse is not true in general (see} \cite[Section 5]{otoniel1}\textit{).}

\end{remark}

The following theorem characterizes Whitney equisingularity (in corank $1$ case) and topological triviality. 

\begin{theorem}\label{18}
\textit{Let $F:(\mathbb{C}^2 \times \mathbb{C},0)\rightarrow (\mathbb{C}^3 \times \mathbb{C},0)$ be a $1$-parameter unfolding of a finitely determined map germ $f:(\mathbb{C}^2,0)\rightarrow(\mathbb{C}^3,0)$. Then:}

\begin{flushleft}
\textit{(a) $F$ is topologically trivial if and only if $\mu(D(f_t))$ is constant.}\\

\textit{(b) If $f$ has corank $1$, then $F$ is Whitney equisingular if and only if $\mu(D(f_t))$ and $m(f_t(D(f_t)))$ are constant.}

\end{flushleft}

\end{theorem}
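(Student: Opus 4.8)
The plan is to derive both equivalences by assembling the structural results on excellent unfoldings quoted in the introduction, so that the proof amounts to translating topological triviality and Whitney equisingularity into statements about the double point curves $D(f_t)$ and $f_t(D(f_t))$.

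For (a), I would first observe that, $f$ being finitely determined, each nearby $f_t$ is finite, generically one-to-one and has reduced double point curve (Theorem \ref{criterio}); hence $f_t$ is the normalisation of its image surface and that surface has smooth normalisation. It follows that $F$ is topologically trivial as an unfolding of map germs (Definition \ref{defequisingularity}(a)) if and only if the family of image surfaces $F(\mathbb{C}^2\times\mathbb{C})$ is a topologically trivial family of surfaces: a trivialisation of $F$ restricts to one of the images, and conversely a trivialisation of the images preserves the double point loci and therefore lifts through the smooth normalisations. Granting this reduction, the equivalence ``topologically trivial $\Leftrightarrow$ $\mu(D(f_t))$ constant'' is precisely \cite[Corollary 40]{ref2} (see also \cite[Theorem 6.2]{ref1}), which I would simply invoke.

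For (b), the forward implication is immediate: if $F$ is Whitney equisingular then it is topologically trivial by Remark \ref{remarkprincipal}(c), so $\mu(D(f_t))$ is constant by part (a); and Whitney regularity of the stratum $F(D(F))\setminus T$ along $T$ in the target stratification (\ref{eq222}), combined with the characterisation of the Whitney conditions for a family of reduced curves through constancy of polar multiplicities (see \cite{gaffney}), forces $m(f_t(D(f_t)))$ to be constant. For the converse, assume $\mu(D(f_t))$ and $m(f_t(D(f_t)))$ are constant. By (a), $F$ is topologically trivial, hence the numbers $C(f_t)$ of cross-caps and $T(f_t)$ of triple points, being topological invariants of the family, are constant as well. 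Since $f$ has corank $1$, $F$ is an excellent unfolding and I would apply Gaffney's criterion \cite[Prop. 8.6 and Cor. 8.9]{gaffney}: $F$ is Whitney equisingular once all the polar multiplicities attached to the stable types in the source and target stratifications (\ref{eq111}), (\ref{eq222}) are constant along $T$. In the source these are governed by $C(f_t)$, $T(f_t)$ and the pair consisting of the multiplicity and the Milnor number of the plane curve $D(f_t)$, the former being forced constant by the latter since a $\mu$-constant family of plane curves is equisingular. In the target the space curve $f_t(D(f_t))$ has its invariants governed by $m(f_t(D(f_t)))$ together with its $\delta$-invariant, and the latter is expressed through $\mu(D(f_t))$, $C(f_t)$ and $T(f_t)$ by the genus formula for the image curve; all these being constant, $F$ is Whitney equisingular.

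The main obstacle is the converse in (b): one has to be certain that, for a corank-$1$ unfolding of a germ $(\mathbb{C}^2,0)\to(\mathbb{C}^3,0)$, the full list of polar multiplicities in Gaffney's criterion really does collapse to the four invariants $C(f_t)$, $T(f_t)$, $\mu(D(f_t))$ and $m(f_t(D(f_t)))$, and that constancy of these four actually yields all the Whitney conditions along $T$. This relies on the explicit description of the stable types, on the equisingularity theory of the plane curve $D(f_t)$ and of the space curve $f_t(D(f_t))$, and on the fact — implicit in \cite{gaffney} and \cite{ref2} — that the $\delta$-invariant of the image curve is a topological invariant of the family; once these are granted the remainder is routine, the substance being carried by the cited theorems.
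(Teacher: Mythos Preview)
The paper does not give a proof of this theorem at all: immediately after the statement it records that part (a) is \cite[Corollary 32]{ref2} (also \cite[Theorem 6.2]{ref1}) and that part (b) is \cite[Corollary 8.9]{gaffney}, stated there via the invariant $e_D(f_t)=\mu(D(f_t))+2m(f_t(D(f_t)))-1$. Your proposal invokes exactly the same sources, so at the level of strategy you and the paper agree; you simply go further and sketch how those cited arguments run.

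Two places in your sketch would need tightening if you actually wanted a self-contained argument. First, the assertion that $C(f_t)$ and $T(f_t)$ are constant because they are ``topological invariants of the family'' is not how this is established; they are analytic invariants, and their constancy under $\mu(D(f_t))$ constant comes from upper semicontinuity together with the numerical relation in \cite[Theorem 4.3]{ref9} linking $\mu(D(f_t))$, $\mu(f_t(D(f_t)))$, $C(f_t)$ and $T(f_t)$ (the paper itself uses this mechanism later, in the proof of Lemma \ref{lemaaux3}). Second, ``since $f$ has corank $1$, $F$ is an excellent unfolding'' is not automatic: excellence in Gaffney's sense requires checking that the parameter axis is a stratum and that no instabilities coalesce onto it, which is precisely what the constancy of $C(f_t)$, $T(f_t)$ and $\mu(D(f_t))$ is used to guarantee before \cite[Prop. 8.6, Cor. 8.9]{gaffney} can be applied. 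Neither point is fatal, but both should be phrased as consequences of the constancy hypotheses rather than as side remarks.
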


Theorem \ref{18} (a) is proved in \cite[Corollary 32]{ref2} (see also \cite[Theorem 6.2]{ref1}). Theorem \ref{18} (b) is proved in \cite[Corollary 8.9]{gaffney}, although it is stated there in terms of the invariant $e_D(f_t)=\mu(D(f_t))+2m(f_t(D(f_t)))-1$. In \cite[Th. 8.7]{gaffney} and \cite[Th. 5.3]{ref9} the reader can find characterizations of Whitney equisingularity which also include the corank $2$ case.\\

Now, we present the main result of this section which is an application of Proposition \ref{propdasformulas} and Lemma \ref{lemapropformulas}.

\begin{theorem}\label{mainresult1} Let $f:(\mathbb{C}^2,0)\rightarrow (\mathbb{C}^3,0)$ be a finitely determined map germ of corank $1$. Write $f$ in the form $f(x,y)=(x,p(x,y),q(x,y))$ and suppose that $p$ and $q$ are quasihomogeneous functions such that the weights of the variables are $w(x)=1$ and $w(y)=b \geq 2$. Let $F=(f_t,t)$ be a $1$-parameter unfolding of $f$. If $F$ is topologically trivial, then $F$ is Whitney equisingular. In particular, the family $F(\mathbb{C}^2 \times \mathbb{C})$ is equimultiple.
\end{theorem}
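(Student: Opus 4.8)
By Theorem~\ref{18}(a) the topological triviality of $F$ is equivalent to the constancy of $\mu(D(f_t))$, so by Theorem~\ref{18}(b) it suffices to prove that $m(f_t(D(f_t)))$ is constant for $t$ near $0$; Whitney equisingularity then follows, and with it the equimultiplicity of $F(\mathbb{C}^2\times\mathbb{C})$, a Whitney equisingular family being in particular equimultiple (as recalled in the introduction). The plan is to deduce the constancy of $m(f_t(D(f_t)))$ from the \emph{a priori} stronger fact that every branch of $f(D(f))$ is smooth. Indeed, by Remark~\ref{remarkprincipal}(a) the family $F(D(F))\to T$ is a flat family of reduced curves, so $t\mapsto m(f_t(D(f_t)))$ is upper semicontinuous, giving $m(f_t(D(f_t)))\le m(f_0(D(f_0)))$ for $t$ near $0$; on the other hand the multiplicity of a reduced curve germ is always at least its number of branches, with equality precisely when every branch is smooth, and by Remark~\ref{remarkprincipal}(b) topological triviality forces the number of branches of $f_t(D(f_t))$ to equal that of $f_0(D(f_0))$. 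Hence, once every branch of $f(D(f))=f_0(D(f_0))$ is known to be smooth, chaining these relations gives $m(f_t(D(f_t)))=\#\{\text{branches of }f(D(f))\}=m(f_0(D(f_0)))$.

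It thus remains to show that every branch of $f(D(f))$ is smooth, and for this I would use the description of $D(f)$ furnished by Lemma~\ref{lemapropformulas} and Proposition~\ref{propdasformulas}. By Lemma~\ref{lemapropformulas}(b), $D(f)=V(\lambda)$ with $\lambda$ of one of the forms (I), (II), (III), so each branch of $D(f)$ is either a curve $\mathscr{C}_\alpha=V(y-\alpha x^b)$ or the line $\mathscr{C}=V(x)$, and there is at most one branch of the second type. For a branch $\mathscr{C}_\alpha$, composing the parametrization $\varphi_\alpha(u)=(u,\alpha u^b)$ of $\mathscr{C}_\alpha$ with $f$ gives $f\circ\varphi_\alpha(u)=(u,p(u,\alpha u^b),q(u,\alpha u^b))$, whose first coordinate is $u$; thus $f$ restricts to a bijection of $\mathscr{C}_\alpha$ onto its image, which is the graph of $X\mapsto(p(X,\alpha X^b),q(X,\alpha X^b))$ and is therefore smooth. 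This disposes of all the identification branches at once.

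The remaining case is the fold branch $\mathscr{C}=V(x)$, which occurs only when $\lambda$ has the form (II) or (III). If $\lambda$ has the form (III), then $f\sim_{\mathcal{A}}g$ with $g(x,y)=(x,y^2,xy)$ by Proposition~\ref{propdasformulas}(a), and since $g(D(g))=g(V(x))=V(X,Z)$ is smooth, so is $f(D(f))$. Suppose then that $\lambda$ has the form (II). By Lemma~\ref{lemapropformulas}(a) we may assume $b\mid d_2$, say $d_2=nb$, and then $d=\dfrac{d_2 d_3}{b}-d_2-d_3+b=(n-1)(d_3-b)$. This is the point where $b\ge 2$ is essential: the form (II) requires $(d-1)/b$ to be an integer, i.e.\ $b\mid(d-1)$, so if $b$ also divided $d_3$ then $b$ would divide $d$, forcing $b\mid 1$ and hence $b=1$, contrary to hypothesis. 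Therefore $b\nmid d_3$, so $q$ has no $x$-free monomial and $q(0,y)\equiv 0$; moreover $p(0,y)\not\equiv 0$, since otherwise $x$ divides both $p$ and $q$ and $f$ fails to be finite, whence $p(0,y)=c\,y^{d_2/b}$ with $c\ne 0$. Parametrizing $\mathscr{C}=V(x)$ by $\varphi(u)=(0,u)$ we then get $f\circ\varphi(u)=(0,c\,u^{d_2/b},0)$, whose image is the coordinate axis $V(X,Z)$, again smooth. Thus every branch of $f(D(f))$ is smooth, and the argument of the first paragraph concludes the proof.

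I expect the main obstacle to be precisely this fold-branch analysis. The essential assertion is that, for $b\ge 2$, the image of a fold component $V(x)$ is always smooth; the mechanism is the arithmetic identity $d=(n-1)(d_3-b)$ together with the divisibility constraint imposed by the form (II), which rule out the only configuration ($b\mid d_2$ \emph{and} $b\mid d_3$) in which $f(V(x))$ could be a singular monomial curve --- a configuration incompatible with $D(f)=V(\lambda)$ being reduced, hence with finite determinacy by Theorem~\ref{criterio}. For $b=1$ this constraint disappears: the fold component of a homogeneous map germ can map onto an honestly singular curve, which is exactly the phenomenon behind the known counterexamples to Ruas's conjecture in that setting.
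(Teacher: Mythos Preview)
Your proposal is correct and follows the same overall architecture as the paper: reduce Whitney equisingularity to the constancy of $m(f_t(D(f_t)))$, then show this follows once every branch of $f(D(f))$ is smooth, dispatch the identification branches $\mathscr{C}_\alpha$ by parametrization, and concentrate on the fold branch $\mathscr{C}=V(x)$. Your first paragraph spells out explicitly the reduction that the paper obtains by citing \cite[Lemma~7.1]{otoniel1}; the argument you give (upper semicontinuity from flatness, branch count preserved by topological triviality, multiplicity $\ge$ branch count with equality iff all branches smooth) is exactly the content of that lemma.

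The genuine difference is in how you handle the fold branch. The paper writes $f=(x,\,x\tilde p+\alpha y^n,\,x\tilde q+\beta y^m)$ and splits into three cases according to which of $\alpha,\beta$ vanish; in the case $\alpha\neq 0,\beta\neq 0$ it then invokes the parity analysis of Proposition~\ref{propdasformulas} separately for $b$ odd and $b$ even to reach a contradiction. Your argument is more direct: from $d=(n-1)(d_3-b)$ and the requirement $b\mid(d-1)$ imposed by form~(II), you deduce that $b$ cannot divide both $d_2$ and $d_3$ once $b\ge 2$, so one of $p,q$ has no pure-$y$ term and $f(\mathscr{C})$ is a coordinate axis. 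This bypasses Proposition~\ref{propdasformulas} entirely for form~(II), treats odd and even $b$ uniformly, and also sidesteps the paper's use of ``fold $\Rightarrow$ generically $2$-to-$1$ $\Rightarrow$ $n=2$'' by observing that the set-theoretic image of $u\mapsto(0,cu^n,0)$ is a line regardless of $n$. The paper's route has the virtue of exhibiting the connection with the parity classification; yours is shorter and isolates the single arithmetic obstruction that makes $b\ge 2$ work.
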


\begin{proof} Note that if $\varphi(u)=(u^m,\varphi_2(u),\varphi_3(u))$ is a Puiseux parametrization (see \cite[p. 98]{chirka}) of a reduced curve in $\mathbb{C}^3$, then its multiplicity is $m$. We have that $D(f)$ is a reduced curve, by Theorem \ref{criterio}. Thus $f(D(f))$ is also a reduced curve, by \cite[Th. 4.3]{ref9}. By \cite[Lemma 7.1]{otoniel1}, it is sufficient to prove that the image of each irreducible component of $f(D(f))$ has multiplicity $1$. Hence, to calculate the multiplicity of the image of each irreducible component of $D(f)$, it is enough to find its parametrizations. 

By Lemma \ref{lemapropformulas}, we have that the irreducible components of $D(f)$ are given by either $V(x)$ or $V(y-\alpha x^b)$, with $\alpha \in \mathbb{C}$. As in Lemma \ref{lemapropformulas} (c), consider the curves $\mathscr{C}_{\alpha}:=V(y-\alpha x^b)$ and $\mathscr{C}:=V(x)$. Consider the Puiseux parametrizations $\varphi_{\alpha}(u):=(u,\alpha u^b)$ and $\varphi(u):=(0,u)$ of $\mathscr{C}_{\alpha}$ and $\mathscr{C}$, respectively. Suppose that $\mathscr{C}_{\alpha} \subset D(f)$. Note that $f \circ \varphi_{\alpha} (u)=(u,p(u,\alpha u^b),q(u,\alpha u^b))$ is a Puiseux parametrization of $f(\mathscr{C}_{\alpha})$. Thus, the multiplicity of $f(\mathscr{C}_{\alpha})$ is $1$ and thus $f(\mathscr{C}_{\alpha})$ is smooth.

Suppose now that $\mathscr{C} \subset D(f)$. We have to show that $f(\mathscr{C})$ is a smooth curve. By Lemma \ref{lemapropformulas} (c), we have that $\mathscr{C}$ is a fold component of $D(f)$. Write $f$ in the form

\begin{center}
$f(x,y)=(x \ , \ x\tilde{p}(x,y) + \alpha y^n \ , \ x\tilde{q}(x,y)+ \beta y^{m})$
\end{center}

\noindent that is, $p(x,y)=x\tilde{p}(x,y) + \alpha y^n$ and $q(x,y)=x\tilde{q}(x,y)+ \beta y^{m}$. Note that $\alpha \neq 0$ or $\beta \neq 0$ and $2 \leq n,m$. Hence, we have three cases:

\begin{flushleft}
\textit{Case (1)} $f \circ \varphi(u)=(0, \alpha u^n,0)$, if $\alpha \neq 0$ and $\beta=0$.\\
\textit{Case (2)} $f \circ \varphi(u)=(0,0,\beta u^m)$, if $\alpha=0$ and $\beta \neq 0$.\\
\textit{Case (3)} $f \circ \varphi(u)=(0, \alpha u^n,\beta u^m)$, if $\alpha \neq 0$ and $\beta \neq 0$.
\end{flushleft}

Let's look at each case.\\

\noindent \textit{Case (1)}: We have that $f \circ \varphi(u)=(0,\alpha u^n,0)$ is generically $n-$to$-1$. Since $\mathscr{C}$ is a fold component of $D(f)$, thus $n=2$.\\

\noindent \textit{Case (2)}: This case is analogous to the previous one.\\

Notice that in the case (1) (respectively, case (2)) the map $\tilde{\varphi}:W\rightarrow V$ defined by $\tilde{\varphi}(u):=(0,\alpha u, 0)$ (respectively, $\tilde{\varphi}(u):=(0,0, \beta u)$) is a Puiseux parametrization of $f(\mathscr{C})$. So, we have that $f(\mathscr{C})$ has multiplicity $1$ in the cases (1) and (2), thus it is smooth in both cases.\\

\noindent \textit{Case (3)}: Note that $d_2=bn$ and $d_3=bm$, where $d_2,d_3$ are the weighted degrees of $p$ and $q$. Thus, $D(f)=V(\lambda(x,y))$, where $\lambda$ has weighted degree equal to $d=bnm-bn-bm+b$. Let's split this case into two parts.\\

Part 1: Suppose that $b$ is odd. We affirm that:\\

\textbf{Statement:} If $b\geq 3$, then $f$ is not finitely determined.\\

\noindent \textit{Proof of the Statement}: Note that $\lambda$ can be written in the form (II) of Lemma \ref{lemapropformulas}. Since the weighted degree of $x$ is $1$, it follows that $d$ is congruent to $1$ modulo $b$. The hypothesis that $b\geq 3$ implies that $0$ and $1$ are not congruent modulo $b$ (this happens just in the case where $b=1$). We have that $d=b(nm-n-m+1)$, thus $d$ is congruent to $0$ modulo $b$, a contradiction.\\

Part 2: Suppose that $b$ is even. By Prop. \ref{propdasformulas} (c), we have that $d_2$ and $d_3$ have different parity. Suppose that $\alpha,\beta \neq 0$. Thus $b_2=bn$ and $b_3=bm$ which are both even. Hence we have that either $\alpha=0$ or $\beta = 0$. Now, we proceed as in case (1) and conclude that $f(\mathscr{C})$ is smooth.\end{proof}

\section{Equimultiplicity of families of map germs}\label{sec5}

$ \ \ \ \ $ We start with two technical lemmas which will be useful to prove the main result of this section.

\begin{lemma}\label{lemaaux3} Let $f:(\mathbb{C}^2,0)\rightarrow (\mathbb{C}^3,0)$ be a finitely determined map germ. Let $F(x,y,t)=(f_t(x,y),t)$ be a $1$-parameter unfolding of $f$ which is topologically trivial. Consider a small representative $F:U \times T \rightarrow V \times T$ of $F$, where $U \times T$, $V \times T$ and $T$ are neighborhood of $0$ in $\mathbb{C}^2 \times \mathbb{C}$, $\mathbb{C}^3 \times \mathbb{C}$ and $\mathbb{C}$, respectively. Then $F^{-1}(\lbrace (0,0,0) \rbrace \times T)=\lbrace (0,0) \times T \rbrace$. 
\end{lemma}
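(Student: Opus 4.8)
\noindent The plan is to exploit topological triviality to transport the question to the trivial unfolding $I(x,y,t)=(f(x,y),t)$, for which the conclusion is immediate from the finite determinacy of $f$. By Definition \ref{defequisingularity}(a), since $F$ is topologically trivial there are germs of homeomorphisms $\Phi(x,y,t)=(\phi_t(x,y),t)$ and $\Psi(X,Y,Z,t)=(\psi_t(X,Y,Z),t)$ with $\phi_0=\mathrm{id}$, $\psi_0=\mathrm{id}$, $\phi_t(0)=0$, $\psi_t(0)=0$, satisfying $\Psi^{-1}\circ F\circ\Phi=I$, equivalently $F\circ\Phi=\Psi\circ I$. First I would fix a representative $F:U\times T\to V\times T$ small enough that representatives of $\Phi$ and $\Psi$ are defined on the relevant sets, that the identity $F\circ\Phi=\Psi\circ I$ holds on the common domain, and that (by the Mather--Gaffney criterion, Remark \ref{remarktriplepoints}, applied to the finitely determined germ $f$) one has $f^{-1}(0)=\{0\}$ in $U$. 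Once this is arranged, the verification is a short diagram chase.

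The key steps, in order, are as follows. (1) Take $(x,y,t)\in F^{-1}(\{(0,0,0)\}\times T)$ and set $(a,b,t)=\Phi^{-1}(x,y,t)$; the last coordinate is unchanged because $\Phi$ is an unfolding homeomorphism, so $\Phi^{-1}$ has the form $(\phi_t^{-1}(x,y),t)$. (2) From $F\circ\Phi=\Psi\circ I$ evaluated at $(a,b,t)$ we get $\Psi(f(a,b),t)=F(x,y,t)=(0,0,0,t)$; applying $\Psi^{-1}$ and using $\psi_t(0)=0$ (hence $\psi_t^{-1}(0)=0$) gives $f(a,b)=0$. (3) Since $f^{-1}(0)=\{0\}$, this forces $(a,b)=(0,0)$, so $(x,y,t)=\Phi(0,0,t)=(\phi_t(0),t)=(0,0,t)$, using $\phi_t(0)=0$. (4) For the reverse inclusion, $\{(0,0)\}\times T\subseteq F^{-1}(\{(0,0,0)\}\times T)$ is immediate because the origin is preserved, i.e. $f_t(0)=0$ for all $t$. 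Combining (1)--(4) yields the asserted equality $F^{-1}(\{(0,0,0)\}\times T)=\{(0,0)\}\times T$.

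I expect the only delicate point to be the passage between germs and representatives: the homeomorphisms $\Phi,\Psi$ are given only as germs, so one must shrink $U\times T$, $V\times T$ (and the domains of $\Phi$ and $\Psi$) in a compatible way so that all the compositions above make sense and $f^{-1}(0)=\{0\}$ holds literally, not just as a germ — this is routine but deserves to be stated carefully. I would also note that the topological triviality hypothesis is genuinely needed: for an arbitrary origin-preserving unfolding the germ $F^{-1}(\{0\}\times T)$ may strictly contain $\{0\}\times T$, because singular points of $f_t$ can drift away from the origin, so no such statement holds without it.
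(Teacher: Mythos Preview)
Your argument is correct and is genuinely different from the paper's own proof. You use the trivialising homeomorphisms $\Phi,\Psi$ from Definition \ref{defequisingularity}(a) directly: transporting a putative extra preimage $(x,y,t)$ back through $\Phi^{-1}$ lands on a point $(a,b,t)$ with $f(a,b)=0$, which is impossible away from the origin by Mather--Gaffney. This is a clean set-theoretic diagram chase, and your remark about shrinking representatives so that $\Phi,\Psi,\Phi^{-1},\Psi^{-1}$ and the identity $F\circ\Phi=\Psi\circ I$ all make literal sense is exactly the only care required.

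The paper instead follows part of the proof of \cite[Theorem 8.7]{gaffney}: it first deduces from topological triviality (via Theorem \ref{18}) that $\mu(D(f_t))$, $C(f_t)$ and $T(f_t)$ are all constant, and then argues by contradiction. An extra preimage is either a ramification point (forcing $C(f_t)$ to jump) or a smooth point $\bar{x}$ mapping to $0$; in the latter case one compares the images $f_t(U_0)$ and $f_t(U_{\bar{x}})$, rules out a $2$-dimensional intersection (else $D(f_t)$ is $2$-dimensional), then rules out the remaining curve case by producing extra triple points or an extra singular point of $D(f_t)$, contradicting constancy of $T(f_t)$ or $\mu(D(f_t))$. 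Your approach is shorter and uses only the existence of the topological trivialisation; the paper's approach has the advantage that it works from the numerical hypothesis ``$\mu(D(f_t))$ constant'' alone (the input to Theorem \ref{18}(a)), without ever invoking the homeomorphisms themselves, and that is the form in which Gaffney's argument is stated and reused elsewhere.
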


\begin{proof} The proof is consequence of part of the proof of \cite[Theorem 8.7]{gaffney}. So, we proceed as in the proof \cite[Th. 8.7]{gaffney} and include the proof for completeness.\\

Since $F$ is topologically trivial, by Theorem \ref{18} we have that $\mu(D(f_t))$ is constant. By \cite[Theorem 4.3]{ref9} and the fact that the invariants $\mu(D(f)),\mu(f(D(f)), C(f)$ and $T(f)$ are upper semicontinuos we also conclude that $C(f_t)$ and $T(f_t)$ are constant.

Suppose that $F^{-1}( \lbrace (0,0,0) \rbrace \times T) \neq \lbrace (0,0) \times T \rbrace$ on any neighborhood of $(0,0)$ in $\mathbb{C}^2 \times \mathbb{C}$. If the points of  $F^{-1}( \lbrace (0,0,0) \rbrace \times T)$ lie in the ramification set $\Sigma(F)$, then $C(f_t)$ must change at the origin so we can assume that they lie in  $F^{-1}( \lbrace (0,0,0) \rbrace \times T) \setminus (\lbrace (0,0) \times T \rbrace \cup \Sigma(F))$. 

Let $\overline{x}=(x_0,y_0,t_0)$ be a point in $F^{-1}( \lbrace (0,0,0) \rbrace \times T) \setminus (\lbrace (0,0) \times T \rbrace \cup \Sigma(F))$ and let $U_0$ and $U_{\overline{x}}$ be neighborhoods of $0=(0,0,t_0)$ and $\overline{x}$, respectively, such that $U_0, U_{\overline{x}} \subset U \times \lbrace t_0 \rbrace$. Consider the images $f_t(U_0)$ and $f_t(U_{\overline{x}})$. By hypothesis, we have that $f(0)=f(\overline{x})$. Note that the intersection $f_t(U_0)\cap f_t(U_{\overline{x}})$ cannot be $2$-dimensional, otherwise $D(f_t)$ is $2$-dimensional and $f_t$ is not finitely determined. Hence $f$ would not be finitely determined if this held for all $t$ close to zero. So, $f_t(U_0)\cap f_t(U_{\overline{x}})$ is a curve. If $f_t(U_0)\cap f_t(U_{\overline{x}})$ lies in the singular set of $f_t(U_0)$ then the set of triple points is at least one dimensional, hence $f$ would not be finitely determined if this held for all $t$ close to zero. Finally, if $f_t(U_{\overline{x}})$ meets the singular set of $f_t(U_0)$ properly, then $\overline{x}$ should be a singular point $D(f_t)$, hence $\mu(D(f_t))$ must jump at $0$.\end{proof}\\

The following result shows us how we can relate the equimultiplicity of a family $f_t$ in terms of the Cohen-Macaulay property of a certain local ring. In the sequel, as in \cite[p. 12, 107, 20]{matsumura}, $l(R)$, $e(q,A)$ and $A_p$ denotes the length of the $0$-dimensional local ring $R$, the Hilbert-Samuel multiplicity of the ideal $q$ in the local ring $A$ and the localization of $A$ in the prime ideal $p$, respectively. As before, $m(f_t(\mathbb{C}^2))$ denotes the multiplicity of $f_t(\mathbb{C}^2)$ at $0$.

\begin{lemma}\label{lemaaux} Let $f:(\mathbb{C}^2,0)\rightarrow (\mathbb{C}^3,0)$ be a finitely determined map germ with corank $1$. Let $F(x,y,t)=(f_t(x,y),t)$ be a $1$-parameter unfolding of $f$. Then

\begin{flushleft}
(a) the ideal $ \langle t \rangle $ in $\mathbb{C} \lbrace x,y,t \rbrace / \langle F^{\ast}(X), F^{\ast}(Y),F^{\ast}(Z) \rangle $ is a parameter ideal and
\end{flushleft}

\begin{center}
$ m(f(\mathbb{C}^2)) = \displaystyle {l \left( \dfrac{\mathbb{C} \lbrace x,y,t \rbrace}{ \langle F^{\ast}(X), F^{\ast}(Y),F^{\ast}(Z),t \rangle} \right)}$ $\hspace{0.5cm} $ and $ \hspace{0.5cm} m(f_t(\mathbb{C}^2)) = \displaystyle {e \left( \langle t \rangle , \dfrac{\mathbb{C} \lbrace x,y,t \rbrace}{ \langle F^{\ast}(X), F^{\ast}(Y),F^{\ast}(Z) \rangle } \right)}$ for $t \neq 0$.
\end{center}

\noindent where $F^{\ast}:\mathbb{C}\lbrace X,Y,Z,t \rbrace \rightarrow \mathbb{C}\lbrace x,y,t \rbrace$ denotes the morphism of local rings associated to $F$.

\begin{flushleft}
(b) $m(f_t(\mathbb{C}^2))$ is constant if and only if $\mathbb{C} \lbrace x,y,t \rbrace / \langle F^{\ast}(X), F^{\ast}(Y),F^{\ast}(Z) \rangle $ is a Cohen-Macaulay ring.
\end{flushleft}

Where $(x,y,t)$ are the coordinates of $\mathbb{C}^2 \times \mathbb{C}$ and $(X,Y,Z,t)$ are the coordinates of $\mathbb{C}^3 \times \mathbb{C}$.

\end{lemma}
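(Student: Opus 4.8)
The plan is to build on the standard identification of the multiplicity of an image with a colength, together with the basic dimension theory of Cohen--Macaulay local rings. Write $A = \mathbb{C}\{x,y,t\}/\langle F^\ast(X),F^\ast(Y),F^\ast(Z)\rangle$; since $F$ is an unfolding of a finitely determined (hence finite) germ, $F$ itself is finite, so $A$ is a finite $\mathbb{C}\{X,Y,Z,t\}$-module and $\dim A = 1$ (the image $F(\mathbb{C}^2\times\mathbb{C})$ is a $3$-dimensional hypersurface germ, and $A$ is the local ring of the source mapped finitely onto it, but as a ring $A$ has dimension equal to $\dim(\mathbb{C}^2\times\mathbb{C})=3$ only if $F$ is injective — more precisely $\dim A = \dim \mathbb{C}^2\times\mathbb{C}=3$). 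Let me re-set: $A$ has Krull dimension $3$ because the source is $3$-dimensional and $F$ is finite onto its image. Then $A/\langle t\rangle = \mathbb{C}\{x,y\}/\langle f^\ast(X),f^\ast(Y),f^\ast(Z)\rangle$ has dimension $2$, so $t$ alone is not a system of parameters; I should instead argue that $\langle t\rangle$ together with two generic linear forms on the target pulled back to $A$ forms a system of parameters, and the relevant statement is that $\langle t\rangle$ is part of a system of parameters, i.e. a parameter ideal in the sense that $A/\langle t\rangle$ has the expected dimension drop. Concretely: $A/\langle t\rangle$ is the local ring associated to $f:(\mathbb{C}^2,0)\to(\mathbb{C}^3,0)$, which is $1$-dimensional after quotienting by a generic hyperplane — so $t$ is a nonzerodivisor up to the Cohen--Macaulay issue, and that is exactly the content of part (b). For part (a), the first displayed formula is the classical fact (see the use of Fitting ideals / the characterization in Remark \ref{remarktriplepoints} and \cite[Th. 4.3]{ref13}) that $m(f(\mathbb{C}^2)) = \dim_\mathbb{C} \mathcal{O}_2/\langle \ell_1\circ f, \ell_2\circ f\rangle$ for generic linear forms $\ell_i$, rewritten via the finite map $F^\ast$ as the stated length; the second formula is the definition of Hilbert--Samuel multiplicity of the parameter ideal $\langle t\rangle$ on the $1$-dimensional ring $A_{\mathfrak p}$ obtained after restriction to the generic line, computing the intersection number of the image surface $f_t(\mathbb{C}^2)$ with a generic line, which is precisely $m(f_t(\mathbb{C}^2))$ for $t$ near but not equal to $0$ by semicontinuity / genericity.

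For the heart of part (b), I would use the theory of Hilbert--Samuel multiplicity along a parameter: after cutting $A$ by two generic linear forms $\ell_1\circ F,\ell_2\circ F$ pulled back from the target (which is legitimate since $F$ is finite, so these are a partial system of parameters and the quotient $B := A/\langle \ell_1\circ F,\ell_2\circ F\rangle$ is a $1$-dimensional local ring), one has $e(\langle t\rangle, B) = l(B/\langle t\rangle)$ when $B$ is Cohen--Macaulay, and in general $e(\langle t\rangle,B)\leq l(B/\langle t\rangle)$ with equality iff $t$ is a nonzerodivisor on $B$, iff $B$ is Cohen--Macaulay (a $1$-dimensional local ring is CM iff it has depth $1$ iff some/any parameter is a nonzerodivisor). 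Combining this with part (a): $l(B/\langle t\rangle) = m(f(\mathbb{C}^2)) = m(f_0(\mathbb{C}^2))$ and $e(\langle t\rangle, B) = m(f_t(\mathbb{C}^2))$ for generic $t\neq 0$; upper semicontinuity of multiplicity gives $m(f_t(\mathbb{C}^2))\leq m(f_0(\mathbb{C}^2))$ always, so the family is equimultiple iff equality holds iff $B$ is CM. Finally I must pass from "$B$ is CM" to "$A$ is CM": since $\ell_1\circ F,\ell_2\circ F$ is a system of parameters (a regular sequence on $A$ would be needed), $A$ is CM iff $B = A/\langle \ell_1\circ F,\ell_2\circ F\rangle$ is CM — this is the standard fact that CM-ness is preserved and reflected by quotienting by a regular sequence, and by a system of parameters in the CM case; one direction is automatic and the other uses that a system of parameters on a CM ring is a regular sequence, so the equivalence "$A$ CM $\iff$ $B$ CM" holds provided we chose $\ell_1\circ F,\ell_2\circ F$ generically.

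I expect the main obstacle to be the bookkeeping around which ideal is a system of parameters and making the reduction from the $3$-dimensional ring $A$ to the $1$-dimensional ring $B$ rigorous — specifically, verifying that generic linear forms pulled back from the target do form a partial system of parameters on $A$ (this needs finiteness of $F$ and the fact that $f$, being finitely determined, has $f^{-1}(0)=\{0\}$, cf. Lemma \ref{lemaaux3}), and that the restriction to a generic line commutes with the length/multiplicity computations so that the classical formula $m(f(\mathbb{C}^2)) = \dim_\mathbb{C}\mathcal{O}_2/(\ell_1\circ f,\ell_2\circ f)$ really matches the displayed length with the variable $t$ present. Once the generic linear forms are fixed and $B$ is in hand, the multiplicity-equals-length criterion for $1$-dimensional local rings and the characterization of CM-ness via nonzerodivisors are routine, and the equivalence with $A$ being CM follows from the behavior of depth under regular sequences.
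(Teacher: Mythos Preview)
Your plan rests on a dimension miscalculation that sends you down an unnecessary detour. You write that $A = \mathbb{C}\{x,y,t\}/\langle F^\ast(X),F^\ast(Y),F^\ast(Z)\rangle$ has Krull dimension $3$ ``because the source is $3$-dimensional''. But you are quotienting by an ideal: its zero set is $\{(x,y,t): f_t(x,y)=0\}$, and since $f$ is finitely determined one has $f^{-1}(0)=\{0\}$, so (using Lemma~\ref{lemaaux3}, which the paper invokes under the implicit hypothesis of topological triviality) this zero set is exactly $\{0\}\times T$. Hence $\dim A = 1$, and $A/\langle t\rangle \cong \mathbb{C}\{x,y\}/\langle f^\ast(X),f^\ast(Y),f^\ast(Z)\rangle$ is Artinian (dimension $0$), not $2$-dimensional. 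Consequently $\langle t\rangle$ is a parameter ideal for $A$ directly --- there is no need to cut by generic linear forms $\ell_1\circ F,\ell_2\circ F$ to manufacture a $1$-dimensional ring $B$.

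Once this is corrected your outline collapses to essentially the paper's argument, but you are also not using the corank~$1$ hypothesis, which the paper exploits in a concrete way: writing $f(x,y)=(x,p,q)$ forces $\langle f^\ast(X),f^\ast(Y),f^\ast(Z)\rangle = \langle x,y^k\rangle$ for some $k$, so the Hilbert--Samuel multiplicity of this ideal equals its colength $k$ on the nose (it is generated by a regular sequence in the regular ring $\mathbb{C}\{x,y\}$). This is how the paper gets the first displayed equality in (a); the second comes from the analogous computation at a nearby $t$ together with \cite[Th.~14.7]{matsumura}. Part~(b) is then the standard criterion \cite[Th.~17.11]{matsumura}: for a $1$-dimensional local ring with parameter ideal $\langle t\rangle$, one has $l(A/\langle t\rangle)=e(\langle t\rangle,A)$ iff $A$ is Cohen--Macaulay. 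Your invocation of this criterion is right in spirit, but it should be applied to $A$ itself, not to an auxiliary $B$.
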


\begin{proof} Consider a small representative $F:U \times T \rightarrow V \times T$ of $F$, where $U \times T$, $V \times T$ and $T$ are neighborhood of $0$ in $\mathbb{C}^2 \times \mathbb{C}$, $\mathbb{C}^3 \times \mathbb{C}$ and $\mathbb{C}$, respectively.\\

(a) Since $F$ is topologically trivial, by Lemma \ref{lemaaux3} we have that $V(\langle F^{\ast}(X), F^{\ast}(Y),F^{\ast}(Z) \rangle)\cap U \times T = (0,0) \times  T$ in the source. Thus, we have the following equality between ideals

\begin{center}
$ \sqrt{ \langle F^{\ast}(X), F^{\ast}(Y),F^{\ast}(Z) \rangle } \mathbb{C}\lbrace x,y \rbrace = \langle x,y \rangle \mathbb{C}\lbrace x,y \rbrace $,
\end{center}

\noindent and therefore $x^n,y^m \in \langle F^{\ast}(X), F^{\ast}(Y),F^{\ast}(Z) \rangle$ for some $n,m \in \mathbb{N}$. Therefore,

\begin{center}
$ \sqrt{ \langle t  \rangle }  \left( \dfrac{\mathbb{C}\lbrace x,y,t \rbrace}{ \langle F^{\ast}(X), F^{\ast}(Y),F^{\ast}(Z) \rangle } \right)= \langle x,y,t \rangle \left( \dfrac{\mathbb{C}\lbrace x,y,t \rbrace}{ \langle F^{\ast}(X), F^{\ast}(Y),F^{\ast}(Z) \rangle } \right)$
\end{center}

\noindent and therefore $t$ is a parameter in $\mathbb{C} \lbrace x,y,t \rbrace / \langle F^{\ast}(X), F^{\ast}(Y),F^{\ast}(Z) \rangle $ (in the sense of \cite[Ch. $14$]{matsumura}). Since $f$ is finitely determined, $f$ is the normalization of its image $f(\mathbb{C}^2)$, thus (see for instance \cite[p. 210]{gaffney}) we have that:

\begin{equation}\label{eq1}
m(f(\mathbb{C}^2))=e(\langle f^{\ast}(X), f^{\ast}(Y),f^{\ast}(Z) \rangle ,\mathbb{C}\lbrace x,y \rbrace).
\end{equation}

Since $f$ has corank $1$, thus $ \langle f^{\ast}(X), f^{\ast}(Y),f^{\ast}(Z) \rangle \mathbb{C}\lbrace x,y \rbrace = \langle x,y^k \rangle \mathbb{C}\lbrace x,y \rbrace $ for some $k$, therefore

\begin{equation}\label{eq2}
e( \langle f^{\ast}(X), f^{\ast}(Y),f^{\ast}(Z) \rangle ,\mathbb{C}\lbrace x,y \rbrace)=e( \langle x,y^k \rangle ,\mathbb{C}\lbrace x,y \rbrace)= \displaystyle l \left( \dfrac{\mathbb{C}\lbrace x,y \rbrace}{ \langle x,y^k \rangle } \right)=l \left(\dfrac{\mathbb{C} \lbrace x,y,t \rbrace}{ \langle F^{\ast}(X), F^{\ast}(Y),F^{\ast}(Z),t \rangle } \right),
\end{equation}

\noindent it follows by (\ref{eq1}) and (\ref{eq2}) the first equality of (a). Since $f$ has corank $1$, we can write $f(x,y)=(x,p(x,y),q(x,y))$. Now, write 

\begin{equation}\label{eq4}
f_t(x,y)=(x+h_1(x,y,t),p(x,y)+h_2(x,y,t),q(x,y)+h_3(x,y,t))
\end{equation}
 
Since $F$ is has also corank $1$, after a change of coordinates we can assume that $h_1=0$ in (\ref{eq4}). Since $f_t$ is birrational over its image, we have that

\begin{center}
$m(f_t(\mathbb{C}^2))=e(\langle f_t^{\ast}(X), f_t^{\ast}(Y),f_t^{\ast}(Z)\rangle,\mathbb{C} \lbrace x,y \rbrace)=e(\langle x,y^s \rangle,\mathbb{C} \lbrace x,y \rbrace)=$
\end{center}

\begin{center}
$= \displaystyle l  \left(\dfrac{\mathbb{C}\lbrace x,y \rbrace}{\langle x,y^s \rangle} \right)=l \left(\dfrac{\mathbb{C}\lbrace x,y,t \rbrace_{ \langle x,y  \rangle}}{\langle F^{\ast}(X), F^{\ast}(Y),F^{\ast}(Z) \rangle } \right)=\displaystyle {e \left( \langle t \rangle , \dfrac{\mathbb{C} \lbrace x,y,t \rbrace}{\langle F^{\ast}(X), F^{\ast}(Y),F^{\ast}(Z) \rangle} \right)}$
\end{center}

\noindent for some $s\geq 1$, where the last equality follows by \cite[Th. 14.7]{matsumura}.\\

(b) Since the ideal $\langle t \rangle $ is a parameter ideal, by (a) and \cite[Th. 17.11]{matsumura} we have that $\mathbb{C} \lbrace x,y,t \rbrace / \langle F^{\ast}(X,Y,Z) \rangle $ is Cohen-Macaulay if and only if

\begin{center}
$ \displaystyle {l \left( \dfrac{\mathbb{C} \lbrace x,y,t \rbrace}{ \langle F^{\ast}(X), F^{\ast}(Y),F^{\ast}(Z),t \rangle } \right)}$ $ =  \displaystyle {e \left( \langle t \rangle , \dfrac{\mathbb{C} \lbrace x,y,t \rbrace}{ \langle F^{\ast}(X), F^{\ast}(Y),F^{\ast}(Z) \rangle } \right)}$,
\end{center}

\noindent if and only if $m(f(\mathbb{C}^2))=m(f_t(\mathbb{C}^2))$, for all $t$.\end{proof}\\

As an application of Lemma \ref{lemaaux}, we present the following theorem.

\begin{theorem}\label{mainresult2} Let $f:(\mathbb{C}^2,0)\rightarrow (\mathbb{C}^3,0)$ be a quasihomogeneous and finitely determined map germ of corank $1$. Let $F(x,y,t)=(f_t(x,y),t)$ be a topologically trivial $1$-parameter unfolding of $f$. If $F$ is an unfolding of non-decreasing weights, then the family $F(\mathbb{C}^2 \times \mathbb{C})$ is equimultiple.
\end{theorem}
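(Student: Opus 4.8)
The plan is to deduce the statement from Lemma \ref{lemaaux}. By part (b) of that lemma the family $F(\mathbb{C}^{2}\times\mathbb{C})$ is equimultiple if and only if the local ring
$A:=\mathbb{C}\{x,y,t\}/\langle F^{\ast}(X),F^{\ast}(Y),F^{\ast}(Z)\rangle$
is Cohen--Macaulay, so the whole problem reduces to proving that $A$ is Cohen--Macaulay. (Alternatively one can bypass (b) and compute $m(f_{t}(\mathbb{C}^{2}))$ directly from the formulas in (a); both routes lead to the same computation.)

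First I would use the corank $1$ hypothesis. Exactly as in the proof of Lemma \ref{lemaaux}, after a change of coordinates in source and target we may assume $F^{\ast}(X)=x$, and then $A\cong \mathbb{C}\{y,t\}/\langle \bar p,\bar q\rangle$ with $\bar p=p_{t}(0,y)$ and $\bar q=q_{t}(0,y)$. Since $F$ is topologically trivial, Lemma \ref{lemaaux3} gives $V(\bar p,\bar q)=\{y=0\}$, so $\dim A=1$ and $\langle\bar p,\bar q\rangle$ is a height-one ideal in the $2$-dimensional regular local ring $R:=\mathbb{C}\{y,t\}$. The elementary fact I would invoke is that a height-one ideal $J$ of such an $R$ has $R/J$ Cohen--Macaulay if and only if $J$ is principal: writing $J=g\cdot J'$ with $g=\gcd$ and $\mathrm{ht}\,J'\geq 1$, either $J'=R$ (and $R/J$ is a hypersurface ring) or $\mathrm{ht}\,J'=2$, in which case $(g)/J\cong R/J'$ is a nonzero finite-length submodule of $R/J$ and $\mathrm{depth}(R/J)=0$. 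Hence it suffices to show $\langle\bar p,\bar q\rangle$ is principal, equivalently that $\bar p\mid\bar q$ or $\bar q\mid\bar p$ in $R$.

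To produce such a divisibility I would combine quasihomogeneity with the non-decreasing-weights hypothesis. Write $f$ of type $(d_{1},d_{2},d_{3};w_{1},w_{2})$ with $d_{1}=w_{1}$; extending the weight function of $f$ to $\mathbb{C}\{x,y,t\}$ by giving $t$ weight $0$, the defining property of an unfolding of non-decreasing weights is that the perturbation series $p_{t}-p$ and $q_{t}-q$ then involve only monomials of weighted degree $\geq d_{2}$, resp. $\geq d_{3}$, so that restricting to $x=0$ one gets $\mathrm{ord}_{y}(\bar p-p(0,y))\geq d_{2}/w_{2}$ and $\mathrm{ord}_{y}(\bar q-q(0,y))\geq d_{3}/w_{2}$ (with $\lceil\cdot\rceil$ in place of $\cdot$ when $w_{2}$ does not divide the degree). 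Relabel the last two coordinates so that $\mathrm{ord}_{y}\,p(0,y)\leq\mathrm{ord}_{y}\,q(0,y)$; since $p$ is quasihomogeneous and $p(0,y)\neq0$ (both orders cannot be infinite, or $f$ would not be finite), we get $w_{2}\mid d_{2}$ and $p(0,y)=c\,y^{d_{2}/w_{2}}$ with $c\neq0$, hence $\bar p=y^{d_{2}/w_{2}}\cdot(\text{unit})$. If moreover $q(0,y)\neq 0$, then $y^{d_{2}/w_{2}}$ divides both $q(0,y)$ (as $\mathrm{ord}_{y}\,q(0,y)\geq d_{2}/w_{2}$) and $\bar q-q(0,y)$, so $\bar p\mid\bar q$, the ideal is principal, and $A$ is Cohen--Macaulay. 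This settles the generic case.

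The delicate point — which I expect to be the main obstacle — is the case $q(0,y)=0$ (which in particular always occurs when $w_{2}\nmid d_{3}$), since then the weight estimate only bounds $\mathrm{ord}_{y}\,\bar q$ from below by $\lceil d_{3}/w_{2}\rceil$, and the coefficient of the lowest power of $y$ in $\bar q$ vanishes at $t=0$, so $\bar q$ need not be a power of $y$ times a unit. Here I would use finite determinacy to rule out the bad configurations. Writing $q=x\tilde q$ (forced by $q(0,y)=0$), finite determinacy (equivalently, generic injectivity and reducedness of $D(f)$, Theorem \ref{criterio}) forces $\tilde q$ genuinely to involve $y$, whence $d_{3}\geq w_{1}+w_{2}>w_{2}$ and thus $\lceil d_{3}/w_{2}\rceil\geq2$; on the other hand, if $m(f)=d_{2}/w_{2}\geq2$ then the parametrization $u\mapsto(0,u)$ shows $V(x)\subset D(f)$, so $V(x)$ is a fold component by Lemma \ref{lemapropformulas}(c) and Definition \ref{typesofcomp}, forcing $m(f)=d_{2}/w_{2}=2$. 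Combining these, $y^{2}\mid\bar q$ while $\bar p=y^{2}\cdot(\text{unit})$, so again $\langle\bar p,\bar q\rangle=(\bar p)$ is principal; and if instead $m(f)=1$, then $f(\mathbb{C}^{2})$ is already smooth and equimultiplicity is automatic. Assembling these cases — with the parity and divisibility bookkeeping supplied by Lemma \ref{lemapropformulas}(a) and Proposition \ref{propdasformulas} — completes the proof; the remaining verifications are routine, but this last case is where the real work lies.
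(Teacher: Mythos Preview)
Your argument is correct and follows the same overall strategy as the paper: reduce via Lemma~\ref{lemaaux}(b) to showing that $A\cong\mathbb{C}\{y,t\}/\langle\bar p,\bar q\rangle$ is Cohen--Macaulay, and then use non-decreasing weights together with finite determinacy to see that $\langle\bar p,\bar q\rangle=\langle y^{k}\rangle$ for some $k$. The substantive difference is in the delicate case. The paper's Case~3 (which, after your relabelling, is precisely your delicate case) obtains $m=2$ via the same fold argument you use, but rules out a linear $y$-term in the perturbation by a \emph{topological} argument: if $r=1$ then $f_{t}$ would be an immersion, so $f_{t}(\mathbb{C}^{2})$ would be smooth, and topological invariance of ``multiplicity one'' would force $m(f(\mathbb{C}^{2}))=1$, contradicting corank~$1$. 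Your route is purely algebraic: since $q=x\tilde q$ with $\tilde q$ necessarily involving $y$, one has $d_{3}\geq w_{1}+w_{2}>w_{2}$, and the non-decreasing-weights bound already gives $\mathrm{ord}_{y}\bar q\geq 2$ directly. This is cleaner and avoids the external appeal to topological invariance of multiplicity.

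One small caveat: you cite Lemma~\ref{lemapropformulas}(c) for the claim that $V(x)$ is a fold component, but that lemma carries the hypothesis $w(x)=1$, which Theorem~\ref{mainresult2} does not assume. The underlying fact you need --- that $f|_{V(x)}$ is $n$-to-$1$ while finite determinacy forces the restriction of $f$ to any branch of $D(f)$ to be generically at most $2$-to-$1$ --- holds without that restriction, so you should argue it directly rather than invoke the lemma. Likewise, the closing references to Lemma~\ref{lemapropformulas}(a) and Proposition~\ref{propdasformulas} are not actually used in your argument and can be dropped.
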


\begin{proof} Write $f$ in the form

\begin{center}
$f(x,y)=(x \ , \ x\tilde{p}(x,y) + \alpha y^n \ , \ x\tilde{q}(x,y)+ \beta y^{m})$
\end{center}

\noindent Note that $\alpha \neq 0$ or $\beta \neq 0$, otherwise $f$ is not finite. As in the proof of Lemma \ref{lemaaux}, we can write $F$ in the following form:

\begin{center}
$F(x,y,t)=( \ x \ , \ \tilde{p}(x,y)+\alpha y^n  +h(x,y,t) \ , \ \tilde{q}(x,y)+\beta y^m +g(x,y,t) \ , \ t \ )$,
\end{center}

\noindent where $h(0,0,t)=g(0,0,t)=0$. Since $F$ is an unfolding of non-decreasing weights, we can write 

\begin{center}
$h(0,y,t)=h_0(t)y^r+h_1(t)y^{r+1}+\cdots$ $ \ \ \ $ and $ \ \ \ $ $g(0,y,t)=g_0(t)y^s+g_1(t)y^{s+1}+\cdots$,
\end{center} 

\noindent where $h_0(t),g_0(t)$ are not identically the zero function, $n\leqslant r$ and also $m\leqslant s$. So, we have that:

\begin{center}
$\dfrac{\mathbb{C}\lbrace x,y,t \rbrace }{\langle F^{\ast}(X), F^{\ast}(Y),F^{\ast}(Z) \rangle}\simeq \dfrac{\mathbb{C}\lbrace y,t \rbrace }{ \langle \alpha y^n+h_0(t)y^r+h_1(t)y^{r+1}+\cdots \ , \ \beta y^m +g_0(t)y^s+g_1(t)y^{s+1}+\cdots \rangle }$
\end{center}

We can suppose that $2 \leqslant n \leqslant m$. Hence, we have three cases:\\

\noindent \textit{ Case 1:} Suppose that $\alpha \neq 0$ and $\beta=0$. Since that $(\alpha+h_0(t)y^{r-n}+h_1(t)y^{r+1-n}+\cdots)$ is an invertible element in $\mathbb{C}\lbrace y,t \rbrace$ and $n\leqslant s$ we have that  

\begin{center}
$\dfrac{\mathbb{C}\lbrace x,y,t \rbrace }{\langle F^{\ast}(X), F^{\ast}(Y),F^{\ast}(Z) \rangle} \simeq \dfrac{\mathbb{C}\lbrace y,t \rbrace }{ \langle y^n \rangle } $,
\end{center}

\noindent which is a Cohen-Macaulay ring.\\

\noindent \textit{Case 2:} Suppose that $\alpha \neq 0$ and $\beta \neq 0$. As in the previous case, we conclude that $\mathbb{C}\lbrace x,y,t \rbrace / \langle F^{\ast}(X), F^{\ast}(Y),F^{\ast}(Z) \rangle $ is a Cohen-Macaulay ring.\\

\noindent \textit{Case 3:} Suppose that $\alpha=0$ and $\beta \neq 0$. Denote by $(\mathscr{C},0)$ the germ of curve defined by $(\mathscr{C},0)=V(x)$ in $U \subset \mathbb{C}^2$. Note that the restriction of $f$ to $\mathscr{C}$ is $m$-to-$1$, thus $m=1$ or $m=2$. Since we made the assumption that $m\geq 2$, the only option is $m=2$. If $r=1$, $f_t$ is an immersion, so $f_t(\mathbb{C}^2)$ is smooth and it has multiplicity $1$. We have that $f(\mathbb{C}^2)$ and $f_t(\mathbb{C}^2)$ are topologically equivalent, so $m(f(\mathbb{C}^2))=1$ (see for instance \cite[Th 3.4]{eyral2}). On the other hand, $f$ has corank $1$, and therefore $m(f(\mathbb{C}^2))>1$, a contradiction. Hence, we have that $r\geq 2$. Since $\beta+(g_0(t)+g_1(t)y+g_2(t)y^2+\cdots)$ is an invertible element in $\mathbb{C}\lbrace y,t \rbrace$ and $r\geq 2$, we have that:

\begin{center}
$\dfrac{\mathbb{C}\lbrace x,y,t \rbrace }{\langle F^{\ast}(X), F^{\ast}(Y),F^{\ast}(Z) \rangle} \simeq \dfrac{\mathbb{C}\lbrace y,t \rbrace }{ \langle y^2 \rangle } $,
\end{center}

\noindent which is also a Cohen-Macaulay ring.\\

\noindent Now, since $\mathbb{C}\lbrace x,y,t \rbrace / F^{\ast}(X,Y,Z)$ is a Cohen-Macaulay ring in all cases, $F$ is equimultiple, by Lemma \ref{lemaaux} (b).\end{proof}\\

We finish this work with some final remarks. 

\begin{remark} (a) We remark that by a result of Damon (\rm\cite[\textit{Corollary 1}]{damon}\textit{), every unfolding of non-decreasing weights is topologically trivial. An interesting question is: in the case of a quasihomogeneous finitely determined map germ $f:(\mathbb{C}^n,0)\rightarrow (\mathbb{C}^{n+1},0)$, an unfolding $F$ of $f$ is topologically trivial if and only if we can choose a system of coordinates such that $F$ is of non-decreasing weights?}\\

\noindent \textit{(b) The difficulty to extend Theorem} \rm\ref{mainresult2} \textit{without any hypothesis on the corank of $f$ is that if $f$ has corank $2$, then Lemma} \rm\ref{lemaaux} \textit{(a) is not more true. In fact, from} \rm\cite[Example 5.4]{ref9} \textit{consider the corank $2$ map germ $f(x,y)=(x^2,y^2,x^3+y^3+xy)$ and the unfolding $F=(f_t(x,y),t)$ of $f$ where $f_t$ is defined as}

\begin{center}
$f_t(x,y)=(x^2,y^2,x^3+y^3+xy+tx^3y+txy^3)$.
\end{center}

\noindent \textit{In} \rm\cite{ref9}, \textit{Marar, Nuño-Ballesteros and Peñafort-Sanchis showed that the unfolding $F$ is Whitney equisingular, in particular, $F$ is topologically trivial. Now, we have that}

\begin{center}
$ 4 = m(f(\mathbb{C}^2)) \neq \displaystyle {l \left( \dfrac{\mathbb{C} \lbrace x,y,t \rbrace}{ \langle x^2,y^2,x^3+y^3+xy+tx^3y+txy^3,t \rangle} \right)}=3$ $\hspace{0.5cm} $ and $ \hspace{0.5cm} 4=m(f_t(\mathbb{C}^2)) \neq \displaystyle {e \left( \langle t \rangle , \dfrac{\mathbb{C} \lbrace x,y,t \rbrace}{ \langle x^2,y^2,x^3+y^3+xy+tx^3y+txy^3 \rangle } \right)}=3$ for $t \neq 0$.
\end{center}

\noindent \textit{However, note that $\mathbb{C} \lbrace x,y,t \rbrace / \langle x^2,y^2,x^3+y^3+xy+tx^3y+txy^3 \rangle $ is a Cohen-Macaulay ring.}
\end{remark}

\begin{flushleft}
\textit{Acknowlegments:} Thanks are due to the referee for his very careful reading, valuable comments and suggestions. We would like to thank Professors M.A.S Ruas and J. Snoussi for many helpful conversations, suggestions and comments on this work. The author would like to thank CONACyT for the financial support by Fordecyt 265667 and UNAM/DGAPA for support by PAPIIT IN $113817$.
\end{flushleft}

\small


\begin{thebibliography}{9}

\bibitem{abd} Abderrahmane, Ould M.: \textit{On deformation with constant Milnor number and Newton polyhedron}. Mathematische Zeitschrift, \textbf{284 (1)}, 167-174, 2016.

%\bibitem{carlos} Bivìa Ausina, C.; Nuño-Ballesteros, J.J: \textit{The deformation multiplicity of a map germ with
%respect to Boardman symbol}, Proceedings of the Royal Society of Edinburgh, \textbf{131A}, 1003-1022, 2001.

\bibitem{buch} Buchweitz, R.O.; Greuel, G.-M.: \textit{The Milnor Number and Deformations of Complex Curve Singularities}.  Invent. Math. \textbf{58(3)}, 241--281, 1980.

\bibitem{ref1} Callejas Bedregal, R.; Houston, K.; Ruas, M.A.S.: \textit{Topological triviality of families of singular surfaces.} preprint, arXiv:math/0611699v1, 2006.

\bibitem{chirka} Chirka, E.M.: \textit{Complex Analytic sets}. Kluwer, Dordrecth, 1990.

\bibitem{damon} Damon, J.: \textit{Topological triviality and versality for subgroups of A and K II. Sufficient conditions and applications.} \textit{Nonlinearity} \textbf{5}, 373--412, 1992.

\bibitem{singular} Decker, W.; Greuel, G.-M.; Pfister, G.; Schönemann, H.: \textit{Singular 4-0-2, A computer algebra system for polynomial computations}, http://www.singular. uni-kl.de, 2015.

\bibitem{eyral5} Eyral, C.; Ruas, M. A. S.: \textit{Deformations of weighted homogeneous polynomials with line singularities and equimultiplicity}. preprint, https://arxiv.org/abs/1602.05732v1, 2016.

\bibitem{eyral4} Eyral, C.; Ruas, M. A. S.: \textit{Deformations with constant Lê numbers and multiplicity of nonisolated hypersurface singularities}. Nagoya Math. J., \textbf{218}, 29--50, 2015.

\bibitem{eyral3} Eyral, C.: \textit{Topologically equisingular deformations of homogeneous hypersurfaces with line singularities are equimultiple}. Internat. J. Math. \textbf{28(5)}, 2017.

\bibitem{eyral2} Eyral, C.: \textit{Zariski's multiplicity questions - A survey}. New Zealand Journal of Mathematics, \textbf{36}, 253--276, 2007.

\bibitem{ref2} Fern\'{a}ndez de Bobadilla, J.; Pe Pereira, M.: \textit{Equisingularity at the Normalisation.} \textit{J. Topol.} \textbf{88}, 879--909, 2008.

%\bibitem{jawad} Fernández de Bobadilla, J.; Snoussi, J.; Spivakovsky, M.: \textit{Equisingularity in one parameter families of generically reduced curves}. Int. Math. Res. Not. no. 5, 1589-1609 (2017).

%\bibitem{jong} de Jong, T.; Pfister, G.: \textit{Local Analytic Geometry}. Vieweg (2000).

\bibitem{gaffney} Gaffney, T.: \textit{Polar multiplicities and equisingularity of map germs.} Topology \textbf{32}, 185--223, 1993.

\bibitem{ref5} Gibson, C. G.; Wirthm\"{u}ller, K.; du Plessis, A. A.; Looijenga, E. J. N.: \textit{Topological stability of smooth mappings}. \textit{Springer LNM.} \textbf{552}, 71--88, 1976.

\bibitem{grauert} Grauert, H.; Remmert. R.: \textit{Theory of Stein spaces}. Grundlehren der Math. Wissenschaffen, \textbf{236}, Berlin, Spring Verlag, 1979. 

\bibitem{greuel2} Greuel, G.-M.: \textit{Constant Milnor number implies constant multiplicity for quasihomogeneous singularities}. Manuscripta Math., \textbf{56 (2)}, 159--166, 1986.

\bibitem{ref6} Hernandes, M. E.; Miranda, A. J.; Pe\~{n}afort-Sanchis, G.: \textit{A presentation matrix algorithm for $f_{\ast}\mathcal{O}_{X,x}$}. Topology and its Applications, \textbf{234}, 440-451, 2018.

%\bibitem{greuel3} Greuel, G.-M.: \textit{Equisingular and Equinormalizable deformations of Isolated Non Normal Singularities}. Methods Appl. Anal. 24, no. 2, 215-276, 2017.

\bibitem{ref7} Marar, W.L; Mond, D.: \textit{Multiple point schemes for corank 1 maps.} J. London Math. Soc. \textbf{39}, 553--567, 1989.

\bibitem{ref9} Marar, W.L.; Nu\~{n}o-Ballesteros, J.J.; Pe\~{n}afort-Sanchis, G.: \textit{Double point curves for corank 2 map germs from $\mathbb{C}^2$ to $\mathbb{C}^3$}. \textit{Topology Appl.} \textbf{159}, 526--536, 2012.

\bibitem{mather} Mather, J. N.: \textit{Stability of $C^{\infty}$ mappings. VI: The nice dimensions}. Proceedings of Liverpool Singularities-Symposium, I, 207--253, (1969/70). Lecture Notes in Math., \textbf{192}, Springer, Berlin, 1971.

\bibitem{matsumura} Matsumura, H.: \textit{Commutative ring theory}. Cambridge Studies in Advanced Mathematics, \textbf{8}. Cambridge University Press, 1986.

\bibitem{mendris} Mendris, R.; Némethi, A.: \textit{The link of $\lbrace f(x,y) + z^n = 0 \rbrace$ and Zariski's conjecture}. Compos. Math., \textbf{141 (2)}, 502--524, 2005.

\bibitem{ref10} Miranda, A. J.; https://sites.google.com/site/aldicio/publicacoes.


\bibitem{ref13} Mond, D.; Pellikaan, R.: \textit{Fitting ideals and multiple points of analytic mappings.} \textit{In: Algebraic Geometry and Complex Analysis.} Lecture Notes in Math. \textbf{1414} (Springer), 107--161, 1987.

\bibitem{mond3} Mond, D.: \textit{The number of vanishing cycles for a quasihomogeneous mapping from $\mathbb{C}^2$ to $\mathbb{C}^3$}; Quart. J. Math. Oxford \textbf{42 (2)}, 335--345, 1991.

\bibitem{ref12} Mond, D.: \textit{Some remarks on the geometry and classification of germs of map from surfaces to 3-space}. \textit{Topology} \textbf{26}, 361--383, 1987.

\bibitem{mond7} Mond, D.: \textit{On the classification of germs of maps from $\mathbb{R}^2$ to $\mathbb{R}^3$}. Proc. London Math. Soc. \textbf{50 (3)}, no. 2, 333--369, 1985.

\bibitem{oshea} O'Shea, O.: \textit{Topologically trivial deformations of isolated quasihomogeneous hypersurface singularities are equimultiple}. Proc. Amer. Math. Soc., \textbf{101 (2)}, 260--262, 1987.

\bibitem{plenat} Plenat, C.; Trotman, D.: \textit{On the multiplicities of families of complex hypersurface-germs with constant Milnor number}. Internat. J. Math., \textbf{24 (3)}, 2013.

\bibitem{ref17} Ruas, M.A.S.: \textit{On the equisingularity of families of corank 1 generic germs}. Contemporary Math. \textbf{161}, 113--121, 1994.

\bibitem{otoniel1} Ruas, M.A.S.; Silva, O.N.:  \textit{Whitney equisingularity of families of surfaces in $\mathbb{C}^3$}, Math. Proc. Cambridge Philos. Soc. \textbf{166}, no. 2, 353--369, 2019.

\bibitem{otoniel2} Silva, O.N.: \textit{Surfaces with non-isolated singularities}, PhD thesis, São Carlos, Universidade de São Paulo, 2017, avalaible on http://www.teses.usp.br/teses/disponiveis/55/55135/tde-10052017-085440/pt-br.php

\bibitem{surfer} Surfer, https://imaginary.org/program/surfer

%\bibitem{otonieljawad} Silva, O.N.; Snoussi, J.: \textit{Whitney equisingularity in one parameter families of generically reduced curves}, preprint, arXiv:1809.03630. (2018).

\bibitem{tomazella} Saia, M. J.; Tomazella, J. N.: \textit{Deformations with constant Milnor number and multiplicity of complex hypersurfaces}, Glasg. Math. J. \textbf{46}, 121--130, 2004.

\bibitem{trotman3} Trotman D.: \textit{Equisingularité et Conditions de Whitney}, Thesis (thèse d'état), Orsay, 1980.

\bibitem{trotman5} Trotman, D.: \textit{Partial results on the topological invariance of the multiplicity of a complex hypersurface}, Séminaire A'Campo-MacPherson (University Paris 7, 1977).

\bibitem{trotman4} Trotman, D.; Van Straten, D.: \textit{Weak Whitney regularity implies equimultiplicity for families of complex hypersurfaces}. Ann. Fac. Sci. Toulouse Math. (6), \textbf{25 (1)}, 161--170, 2016.

\bibitem{zariski62} Zariski, O.: \textit{On the topology of algebroid singularities}, Amer. J. Math. \textbf{54}, 453--465, 1932.

\bibitem{zariski71} Zariski, O.: \textit{Some open questions in the theory of singularities}. Bull. of the Amer. Math. Soc. \textbf{77}, 481--491, 1971.

\bibitem{ref20} Wall, C. T. C.: \textit{Finite determinacy of smooth map-germs}. Bull. London Math. Soc. (6) \textbf{13}, 481--539, 1981.

\end{thebibliography}
\end{document}